\newcommand{\Pairs}{\mathbf{P}}
\newcommand{\tWPQn}{\tilde{W}_{n}}
\begin{document}
\title{Two-dimensional cycle classes on $\Mbar_{0,n}$}
\date{\today}
\author{Rohini Ramadas and Rob Silversmith}
\maketitle

\begin{abstract}
  For each $n\ge5$, we give an $S_n$-equivariant basis for
  $H_4(\Mbar_{0,n},\Q)$, as well as for $H_{2(n-5)}(\Mbar_{0,n},\Q)$. Such a basis exists for
  $H_2(\Mbar_{0,n},\Q)$ and for $H_{2(n-4)}(\Mbar_{0,n},\Q)$, but it is not known whether one exists for
  $H_{2k}(\Mbar_{0,n},\Q)$ when $3\le k\le n-6$.
\end{abstract}

\section{Introduction}

The moduli space $\Mbar_{0,n}$ is an $(n-3)$-dimensional smooth
projective variety that parametrizes stable $n$-marked genus-zero
curves. $\Mbar_{0,n}$ admits a stratification whose intricate
combinatorics are reflected in the structure of its homology
groups. These groups have been described, in terms of generators and
relations, first by Keel \cite{Keel1992} and then by Kontsevich and Manin
\cite{KontsevichManin1994}. The groups were described further by Kapranov \cite{Kapranov1993}, Fulton-MacPherson \cite{FultonMacpherson1994}, Manin \cite{Manin1995}, Getzler \cite{Getzler1995}, Yuzvinsky \cite{Yuzvinsky1997}
and others. 

$\Mbar_{0,n}$ carries a natural $S_n$-action; this induces
$S_n$-actions on the homology groups of $\Mbar_{0,n}$. Getzler \cite{Getzler1995} gave an algorithm for computing the character of $H_{2k}(\Mbar_{0,n},\Q)$ as an $S_n$-representation, and Bergstr\"om and
Minabe \cite{BergstromMinabe2013} later gave a recursive formula for the character using spaces of weighted stable curves. 

%

Recall that a finite-dimensional representation $V$ of a finite group $G$ is called a \textit{permutation representation} if $V$ has a \textit{permutation basis}; that is, a basis $B$ of $V$ such that the action of $G$ on $V$ restricts to an action on $B$. 
We ask:

\begin{question}\label{q:MainQ}
  Is $H_{2k}(\Mbar_{0,n},\Q)$ a permutation representation of $S_n$
  for all $n\ge3$ and $k\ge0$?
\end{question}

Farkas and Gibney \cite{FarkasGibney2003} gave an affirmative answer if $k=n-4$ by producing a permutation basis of divisors, the case
$k=n-4$. Since Poincar\'e duality induces $S_n$-equivariant
isomorphisms $H_{2k}(\Mbar_{0,n},\Q)\to H_{2(n-3-k)}(\Mbar_{0,n},\Q)$,
this also implies an affirmative answer if $k=1.$ The first author found \cite{RamadasThesis}
a permutation basis for $H_{2}(\Mbar_{0,n},\Q)$ that is not Poincar\'e
dual to the basis of Farkas and Gibney --- interestingly, the two bases
are not isomorphic as $S_n$-sets for even $n$.

We give an
affirmative answer in the case $k=2$ (and therefore also $k=n-5$):

\medskip

\noindent \textbf{Theorem \ref{thm:MainThmWeak}.}  \textit{Let
$\Pairs^1_{2,n}$ denote the $S_n$-set of all subsets
$A\subseteq\{1,\ldots,n\}$ such that $5\le\abs{A}\le n$ and $\abs{A}$
is odd. Let $\Pairs^2_{2,n}$ denote the $S_n$-set of unordered pairs
$\{P_1,P_2\},$ where $P_1$ and $P_2$ are disjoint subsets of
$\{1,\ldots,n\}$ of cardinality at least 3. Then $H_4(\Mbar_{0,n},\Q)$ is a permutation representation of $S_n$, with a permutation basis in equivariant bijection with $\Pairs^1_{2,n}\sqcup\Pairs^2_{2,n}$, for all $n\ge3.$}

\medskip



For general $k$, Question \ref{q:MainQ} appears to be open. The known bases of
$H_{2k}(\Mbar_{0,n},\Q)$ are
constructed via recursion on $n$, which involves treating the $n$-th marked point as special. For example, Kapranov \cite{Kapranov1993} and Yuzvinsky \cite{Yuzvinsky1997} described bases (recursively and in closed form, respectively) for $H_{2k}(\Mbar_{0,n},\Q)$ that are permutation bases with respect to the $S_{n-1}$ action induced by permuting all but the $n$th point.
In contrast, a permutation basis
for $H_{2k}(\Mbar_{0,n},\Q)$, if it exists, would treat the $n$ marked
points symmetrically.

Very recent work of Castravet and Tevelev \cite{CastravetTevelev2020} on the derived category of $\Mbar_{0,n}$
implies that $H_*(\Mbar_{0,n},\Q)=\bigoplus_{k=0}^{n-3}H_{2k}(\Mbar_{0,n},\Q)$ is a
permutation representation; however, this does not answer Question \ref{q:MainQ}, as the permutation basis given for $H_*(\Mbar_{0,n},\Q)$ consists of Chern characters of certain vector bundles, which are not of pure degree.


\subsection{An \texorpdfstring{$S_n$}{S\_n}-equivariant filtration of \texorpdfstring{$H_{2k}(\Mbar_{0,n})$}{H\_{2k}(M\_{0,n}-bar)} and the proof of Theorem \ref{thm:MainThmWeak}}
 Theorem \ref{thm:MainThmWeak} follows from the stronger Theorem \ref{thm:MainThmStrong}, which we now explain. In \cite{RamadasThesis}, the first author gave (Definition \ref{eq:RamadasFiltration}) an $S_n$-equivariant decomposition 
\begin{align}\label{eq:HDecomp}
    H_{2k}(\Mbar_{0,n},\Q)\cong\bigoplus_{r=1}^{\min\{k,n-2-k\}}Q^r_{k,n}.
\end{align}
The first author also showed (Theorem \ref{thm:RohiniQ1kn}) that for all $k\ge0$ and $n\ge3$, $Q^1_{k,n}$ has a permutation basis, in equivariant bijection with the $S_n$-set $\Pairs^1_{k,n}$ defined in Theorem \ref{thm:RohiniQ1kn}. We prove:

\medskip

\noindent \textbf{Theorem \ref{thm:MainThmStrong}.} \textit{For all $k\ge0$ and $n\ge3$, $Q^2_{k,n}$ has a permutation basis, in equivariant bijection with the $S_n$-set $\Pairs^2_{k,n}$ defined in Definition \ref{Def:PairsDef}.}

\medskip

\noindent \textbf{Corollary \ref{cor:MainCor}.} \textit{The $S_n$-permutation representation $\Q(\Pairs^1_{k,n}\sqcup\Pairs^2_{k,n})$ is a subrepresentation of $H_{2k}(\Mbar_{0,n},\Q)$ for all $k$ and $n$.}

\medskip

By \eqref{eq:HDecomp}, $H_2(\Mbar_{0,n},\Q)=Q^1_{1,n}$ and $H_4(\Mbar_{0,n},\Q)\cong Q^1_{2,n}\oplus Q^2_{2,n}$. Thus Theorems \ref{thm:RohiniQ1kn} and \ref{thm:MainThmStrong} imply Theorem \ref{thm:MainThmWeak}. Note also that by Poincar\'e duality, for $n\le8$, these express $H_{2k}(\Mbar_{0,n},\Q)$ as a permutation representation for all $k$.

\subsection{Experimental evidence towards Question \ref{q:MainQ}} Let $V$ be a permutation representation of $G$ with permutation basis $B$. Then $B$ is a disjoint union of $G$-orbits, i.e. transitive $G$-sets. It is important to note that if $B_1$ and $B_2$ are two permutations bases for $V$, they need not be isomorphic as $G$-sets. However, the number of orbits of $B_1$ is equal to the dimension of the $G$-fixed subspace of $V$, thus equal to the numbers of orbits of $B_2$. Recall that every transitive $G$-set is equivariantly isomorphic to the $G$-set of left cosets of some subgroup of $G$. The above gives a computational strategy for checking whether a given $G$-representation $W$ is a permutation representation --- one enumerates the subgroups of $G$ and the associated characters, then checks whether the character of $W$ is a non-negative integer combination of those characters. This is doable in the GAP computer algebra system, at least through $n=10$.


J. Bergstr\"om and S. Minabe shared with us the characters of $H_{2k}(\Mbar_{0,n},\Q)$ for $n\le 20$. We now give some observations inferred from their data, from Theorem \ref{thm:MainThmStrong}, and from a GAP analysis as above. The smallest example where Theorems \ref{thm:RohiniQ1kn} and \ref{thm:MainThmStrong} do not answer Question \ref{q:MainQ} is $H_6(\Mbar_{0,9},\Q)$.
\begin{itemize}
    \item $Q^3_{3,9}$ is not a permutation representation. We computed the character of $Q^3_{3,9}$ by subtracting those of $Q^1_{3,9}$ and $Q^2_{3,9}$ (from Theorems \ref{thm:RohiniQ1kn} and \ref{thm:MainThmStrong}) from that of $H_6(\Mbar_{0,9},\Q)$ (from Bergstr\"om and Minabe's data).
    \item Despite the previous point, $H_6(\Mbar_{0,9},\Q)$ is a permutation representation. In fact, $H_6(\Mbar_{0,9},\Q)$ has two inequivalent decompositions into transitive permutation representations. It follows that in these two decompositions, the two permutation subrepresentations $\Q\Pairs^1_{3,9}$ and $\Q\Pairs^2_{3,9}$ \textit{cannot} both appear.
    \item $H_{2k}(\Mbar_{0,10},\Q)$ is a permutation representation for all $k$.
\end{itemize}

\subsection{Outline of the paper} In Section \ref{sec:Setup}, we recall fundamental results about the homology groups of $\Mbar_{0,n}$, and define $Q^r_{k,n}$, $\Pairs^1_{k,n}$, and $\Pairs^2_{k,n}$. In Section \ref{sec:ProofOutline} we outline the proof of Theorem \ref{thm:MainThmStrong}, and the complete proof is in Section \ref{sec:Proof}. In Section \ref{sec:conj}, we state a conjectural formula for the dimension of $Q^r_{k,n}$ in general.

\subsection*{Acknowledgements} Most of this work was conducted while
both authors were visiting the Max-Planck-Institut f\"{u}r Mathematik
in den Naturwissenschaften in Leipzig. We thank Bernd Stumrfels for
the invitations, as well as the entire institute for providing a
stimulating work environment.

We would like to thank J. Bergstr\"om and S. Minabe for generously
sharing data they had generated (based on their recursive formula) of
the characters of $H_{2k}(\Mbar_{0,n},\Q)$ for $n\le 20$ --- this was
crucial in allowing us to formulate and check conjectures. We are
grateful to Bernd Sturmfels, Renzo Cavalieri, Maria Monks Gillespie
and Melody Chan for useful conversations.

The first author was supported by NSF grants DMS-0943832 and DMS-1703308, and by a postdoctoral position at Brown University. The second author was supported by NSF grants  DMS-0602191 and DMS-1645877, and by a postdoctoral position at Northeastern University.

\section{Background and notation}\label{sec:Setup}
\begin{Def}
  Let $\mathbf{S}_{k,n}$ denote the set of stable trees with $n-2-k$ vertices, $n-3-k$ edges, and $n$
  marked half-edges (labeled by the set $\{1,\ldots,n\})$. (A tree is \textit{stable} if each vertex has valence at least 3.) $\mathbf{S}_{k,n}$ is in canonical bijection with the set of (closed) $k$-dimensional boundary strata in $\Mbar_{0,n}.$
\end{Def}
We will abuse notation, and refer to an element of $\mathbf{S}_{k,n}$
interchangeably as a marked tree, as a closed $k$-dimensional
subvariety of $\Mbar_{0,n}$, and as a cycle class in
$H_{2k}(\Mbar_{0,n})$ or various quotients thereof. For example,
it is a well-known result of Keel \cite{Keel1992} that $H_{2k}(\Mbar_{0,n},\Q)$ is
spanned by
$\mathbf{S}_{k,n}.$ Kontsevich and Manin \cite{KontsevichManin1994} gave a spanning set for the kernel of the map $\Q\mathbf{S}_{k,n}\to H_{2k}(\Mbar_{0,n},\Q)$. Let $\sigma\in\mathbf{S}_{k+1,n},$ let $v$ be a vertex of $\sigma$ with valence at least 4, and let $A,B,C,D$ be four edges or half-edges incident to $v$. Let $\mathbf{T}$ denote the set of edges and half-edges incident to $v$, other than $A,B,C,D$. We define a relation
\begin{align}\label{eq:KMRelation}
\mathcal{R}=\mathcal{R}(\sigma,v,A,B,C,D)=\sum_{\mathbf{U}_1\sqcup\mathbf{U}_2=\mathbf{T}}(AB\mathbf{U}_2|CD\mathbf{U}_2)-\sum_{\mathbf{U}_1\sqcup\mathbf{U}_2=\mathbf{T}}(AC\mathbf{U}_2|BD\mathbf{U}_2)\in\Q\mathbf{S}_{k,n}.
\end{align}
We now explain this notation, which we will continue to use. Here $(AB\mathbf{U}_1|CD\mathbf{U}_2)\in\mathbf{S}_{k,n}$ is the tree obtained from $\sigma$ by replacing $v$ with two vertices $v',v''$ connected by an edge $e'$, with the edges or half-edges $\{A,B\}\cup\mathbf{U}_1$ incident to $v'$ and the edges or half-edges $\{C,D\}\cup\mathbf{U}_2$ incident to $v''.$ Note that contracting $e'$ yields $\sigma$ again. Kontsevich and Manin showed that the relations obtained this way (varying over all choices of $\sigma$, $v$, $A,B,C,D$) span the kernel of the map $\Q\mathbf{S}_{k,n}\to H_{2k}(\Mbar_{0,n},\Q)$.

Note that for any
$\sigma\in\mathbf{S}_{k,n},$ we have
\begin{align}\label{eq:TreeToPartition}
\sum_{v}(\val(v)-3)=k,
\end{align}
where the sum is over vertices of $\sigma$.
Thus to any $\sigma\in\mathbf{S}_{k,n}$ we may associate a partition
$\lambda_\sigma$ of $k$, namely the set of nonzero summands on the
right-hand side of \eqref{eq:TreeToPartition}. Since $\abs{V(\sigma)}=n-2-k$, $\lambda_\sigma$ has at most $n-2-k$ parts. (As a partition of $k$, $\lambda_\sigma$ also has at most $k$ parts.)
\begin{Def}\label{Def:RamadasFiltration}\cite{Ramadas2015}
  For $r\in\{1,\ldots,\min\{k,n-2-k\}\},$ let
  $\mathbf{S}_{k,n}^{\ge r}\subseteq\mathbf{S}_{k,n}$ be the subset
  consisting of those $\sigma\in\mathbf{S}_{k,n}$ such that
  $\lambda_\sigma$ has at least $r$ parts. This defines an $S_n$-invariant
  filtration
  $$\mathbf{S}_{k,n}=\mathbf{S}_{k,n}^{\ge1}\supseteq\mathbf{S}_{k,n}^{\ge2}\supseteq\cdots\supseteq\mathbf{S}_{k,n}^{\ge
    k}.$$ Since $H_{2k}(\Mbar_{0,n},\Q)$ is spanned by
  $\mathbf{S}_{k,n},$ this also defines an $S_n$-invariant filtration
  \begin{align}\label{eq:RamadasFiltration}
    H_{2k}(\Mbar_{0,n},\Q)=Q_{k,n}^{\ge1}\supseteq
    Q_{k,n}^{\ge2}\supseteq\cdots\supseteq Q_{k,n}^{\ge k},
  \end{align}
  where
  $$Q^{\ge r}_{k,n}=\Span(\mathbf{S}_{k,n}^{\ge r})\subseteq
  H_{2k}(\Mbar_{0,n},\Q).$$ Finally, let $\mathbf{S}_{k,n}^r=\mathbf{S}_{k,n}^{\ge
    r}\setminus\mathbf{S}_{k,n}^{\ge r+1}$ and 
  $Q^r_{k,n}=Q_{k,n}^{\ge r}/Q^{\ge r+1}_{k,n}.$ Note that
  $\mathbf{S}_{k,n}^r$ spans $Q_{k,n}^r$, and that as $S_n$-representations, $$H_{2k}(\Mbar_{0,n},\Q)\cong\bigoplus_{r=1}^{\min\{k,n-2-k\}}Q^r_{k,n}.$$
\end{Def}
In \cite{Ramadas2015}, it is shown that the filtration \eqref{eq:RamadasFiltration} is preserved by pushforwards along tautological morphisms of moduli spaces $\Mbar_{0,n}$. In \cite{RamadasThesis}, $Q^1_{k,n}$ is computed explicitly:
\begin{thm}[\cite{RamadasThesis}]\label{thm:RohiniQ1kn}
  Let
  $\Pairs^1_{k,n}=\{A\subseteq\{1,\ldots,n\}:k+3\le\abs{A}\le
  n,\thickspace\abs{A}\equiv k+3\mod2\},$ as an $S_n$-set. Then
  $Q^1_{k,n}\cong\Q\Pairs^1_{k,n}$ as $S_n$-representations.
\end{thm}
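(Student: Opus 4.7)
The plan is to cut down the spanning set $\mathbf{S}_{k,n}^1$ to set-partitions and then exhibit an $S_n$-equivariant isomorphism with $\Q\Pairs^1_{k,n}$. First I would show that for $\tau\in \mathbf{S}_{k,n}^1$, the class $[\tau]\in Q^1_{k,n}$ depends only on the $(k+3)$-part set-partition $P_\tau=\{B_1,\ldots,B_{k+3}\}$ of $\{1,\ldots,n\}$ induced by the branches at the unique high-valence vertex $u$. Indeed, the stratum closure of $\tau$ in $\Mbar_{0,n}$ is the image, under a gluing map $\Phi\colon \prod_{i:\,|B_i|\ge 2}\Mbar_{0,|B_i|+1}\times\Mbar_{0,k+3}\to\Mbar_{0,n}$, of the class $\bigotimes_i[\mathrm{pt}_{\tau_i}]\otimes[\Mbar_{0,k+3}]$; each $[\mathrm{pt}_{\tau_i}]$ is a $0$-cycle on the connected variety $\Mbar_{0,|B_i|+1}$ and thus represents the unique class in $H_0(\Mbar_{0,|B_i|+1},\Q)=\Q$, independent of the trivalent subtree $\tau_i$. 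With this reduction, the Kontsevich-Manin relations that survive in $Q^1_{k,n}$ come only from $\sigma\in\mathbf{S}_{k+1,n}^1$ and take the ``merge'' form
\[
[\{X_{12},X_3,X_4,\vec Y\}]+[\{X_1,X_2,X_{34},\vec Y\}]=[\{X_{13},X_2,X_4,\vec Y\}]+[\{X_1,X_3,X_{24},\vec Y\}],
\]
where $\{X_1,X_2,X_3,X_4,Y_1,\ldots,Y_k\}$ ranges over $(k+4)$-part partitions of $\{1,\ldots,n\}$ and $X_{ij}:=X_i\cup X_j$.

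Next I would define an $S_n$-equivariant map $\Psi\colon\Q\Pairs^1_{k,n}\to Q^1_{k,n}$ by the parity sum
\[
\Psi(A):=\sum_P\,[P],
\]
ranging over $(k+3)$-part set-partitions $P$ of $\{1,\ldots,n\}$ every block of which has odd intersection with $A$. The hypothesis $|A|\equiv k+3\pmod 2$ is precisely what makes the sum nonempty; well-definedness into $Q^1_{k,n}$ is automatic since the target is a quotient. As a warm-up, for $n=5,k=1$ the map sends $A_j:=\{1,\ldots,5\}\setminus\{j\}$ to $E_j:=\sum_{i\ne j}[D_{\{i,j\}}]\in H_2(\Mbar_{0,5},\Q)$, and a direct intersection-theoretic computation (using $D_S^2=-1$, $D_S\cdot D_{S'}=1$ when $S\cap S'=\emptyset$, and $D_S\cdot D_{S'}=0$ when $|S\cap S'|=1$) yields a Gram matrix with diagonal entries $-4$ and off-diagonal entries $5$, which is nonsingular, so the five $E_j$'s form a permutation basis.

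Finally I would show that $\Psi$ is an isomorphism in general. $S_n$-equivariance is built into the definition. The central task is linear independence of $\{\Psi(A)\}_{A\in\Pairs^1_{k,n}}$ in $Q^1_{k,n}$; combined with a dimension count against the known character of $H_{2k}(\Mbar_{0,n},\Q)$ from Getzler or Bergstr\"om-Minabe and the decomposition \eqref{eq:HDecomp}, this upgrades to bijectivity. The cleanest route is to construct an explicit left inverse of $\Psi$ by an inclusion-exclusion or M\"obius-type formula expressing each partition class $[P]$ as a signed sum of $\Psi(A)$'s. The main obstacle is exactly this: for $|A|>k+3$ the parity condition allows blocks of various odd $A$-intersection sizes $(3,1,\ldots,1)$, $(5,1,\ldots,1)$, $(3,3,1,\ldots,1)$, etc., so the inverse formula must track these contributions carefully, and verifying its compatibility with every four-term merge relation is the delicate heart of the argument.
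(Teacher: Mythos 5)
The paper does not prove Theorem~\ref{thm:RohiniQ1kn}; it cites it from \cite{RamadasThesis}, so there is no ``paper's own proof'' here to compare against. Judged on its own terms, your reduction is sound: using Proposition~\ref{prop:RearrangeFullyDegenerateSubtree} to collapse $\mathbf{S}^1_{k,n}$ to $(k+3)$-part set partitions, and observing that the only Kontsevich--Manin relations surviving in $Q^1_{k,n}$ are the four-term ``merge'' relations coming from $\sigma\in\mathbf{S}^1_{k+1,n}$ (the relations from $\mathbf{S}^2_{k+1,n}$ with a $4$-valent vertex are absorbed by the rearrangement equivalence, and everything else lands in $\mathbf{S}^{\ge2}_{k,n}$), is correct. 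Your candidate map $\Psi$ and the $n=5$, $k=1$ Gram-matrix check are also fine.

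But what remains is not a technicality you can defer --- it is the theorem. You explicitly leave unproved both (a) that the $\Psi(A)$ span $Q^1_{k,n}$ and (b) that they are independent there, saying the inverse formula and its compatibility with every merge relation is ``the delicate heart of the argument.'' That heart is the entire content of the statement; without it you have a plausible framework, not a proof. Moreover, the proposed fallback --- linear independence plus ``a dimension count against the known character of $H_{2k}(\Mbar_{0,n},\Q)$ and the decomposition~\eqref{eq:HDecomp}'' --- does not close the gap: the character of $H_{2k}$ together with~\eqref{eq:HDecomp} tells you the sum $\sum_r\dim Q^r_{k,n}$, not $\dim Q^1_{k,n}$ individually, so you cannot certify surjectivity that way without already knowing the answer. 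For what it is worth, the structure of the paper's proof of the analogous Theorem~\ref{thm:MainThmStrong} --- a pair of explicitly defined, sign-laden maps ($\rho$ and a combinatorial $\tWPQn$) each shown to factor through the other, with a case-by-case verification against the KM relations --- is almost certainly what the thesis proof of Theorem~\ref{thm:RohiniQ1kn} looks like, and it is precisely the style of verification you are deferring. I would also flag that your $\Psi$ carries no signs, whereas the analogous $\tWPQn$ in Proposition~\ref{prop:Pairs0Isom} has coefficients $(-1)^{e_\Pi(\Gamma)}/2$; that is not conclusive since your map goes the opposite direction, but it is a warning that the sign bookkeeping you are postponing may be where the real difficulty lies, and that the unsigned $\Psi$ should be tested against the merge relations in at least one case with $k\ge2$ and $|A|>k+3$ before committing to it.
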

Since $H_2(\Mbar_{0,n},\Q)$ consists of the single graded piece
$Q^1_{1,n}$, this implies:
\begin{cor}[\cite{RamadasThesis}]\label{cor:RohiniQ1kn}
  For $n\ge3$,
  $H_{2}(\Mbar_{0,n},\Q)\cong\Pairs^1_{1,n}\cong\{A\subseteq\{1,\ldots,n\}:4\le\abs{A}\le
  n,\thickspace\abs{A}\text{ even}\}$ as $S_n$-representations.
  \end{cor}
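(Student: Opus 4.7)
The plan is to specialize Theorem \ref{thm:RohiniQ1kn} to $k=1$. By Definition \ref{Def:RamadasFiltration}, there is an $S_n$-equivariant direct-sum decomposition
$$H_{2k}(\Mbar_{0,n},\Q)\cong\bigoplus_{r=1}^{\min\{k,n-2-k\}}Q^r_{k,n}.$$
Setting $k=1$ collapses the range of $r$: for $n\ge 4$, $\min\{1,n-3\}=1$, so the only summand is $Q^1_{1,n}$ and $H_2(\Mbar_{0,n},\Q)\cong Q^1_{1,n}$. The degenerate case $n=3$ is immediate since $\Mbar_{0,3}$ is a point and the set $\{A\subseteq\{1,2,3\}:4\le|A|\le 3\}$ is empty.

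Next I would apply Theorem \ref{thm:RohiniQ1kn} with $k=1$ to conclude that $Q^1_{1,n}\cong\Q\Pairs^1_{1,n}$ as $S_n$-representations, where
$$\Pairs^1_{1,n}=\{A\subseteq\{1,\ldots,n\}:4\le|A|\le n,\ |A|\equiv 4\pmod 2\}.$$
The parity condition $|A|\equiv k+3\pmod 2$ becomes simply ``$|A|$ even'' when $k=1$, recovering the description in the corollary. Composing the two isomorphisms gives the result.

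There is essentially no obstacle: the corollary is a direct consequence of Theorem \ref{thm:RohiniQ1kn} together with the observation that the filtration of $H_2$ has only one nontrivial graded piece. The only bookkeeping is translating the mod-$2$ parity condition and verifying the trivial case $n=3$; the substantive content lies entirely in Theorem \ref{thm:RohiniQ1kn}, which is quoted from \cite{RamadasThesis}.
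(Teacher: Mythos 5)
Your proof is correct and matches the paper's reasoning exactly: the paper deduces the corollary in one line from Theorem~\ref{thm:RohiniQ1kn} by noting that $H_2(\Mbar_{0,n},\Q)$ consists solely of the graded piece $Q^1_{1,n}$. Your additional bookkeeping (the $\min\{1,n-3\}=1$ computation, the $n=3$ degenerate case, and the parity translation) is a harmless elaboration of the same argument.
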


\begin{Def}\label{Def:PairsDef}
  Let $\Pairs_{k,n}^2$ denote the set of unordered pairs
  $\{(P_1,\alpha_1),(P_2,\alpha_2)\}$, where:
  \begin{itemize}
  \item $P_1$ and $P_2$ are disjoint subsets of $\{1,\ldots,n\}$, and
  \item $\alpha_1$ and $\alpha_2$ are positive integers whose sum is
    $k$, and
  \item $P_i\ge\alpha_i+2$ for $i\in\{1,2\}$.
  \end{itemize}
\end{Def}
Observe that Definition \ref{Def:PairsDef} is consistent with the the
notation $\Pairs^2_{2,n}$ from Theorem \ref{thm:MainThmWeak}.
\begin{Def}\label{Def:WPDef}
  We define an $S_n$-equivariant map
  $W_n:\mathbf{S}^2_{k,n}\to\Pairs^2_{k,n}$ as follows. For
  $\sigma\in\mathbf{S}^2_{k,n}$, let $v_1$ and $v_2$ be the two
  vertices of $\sigma$ with valence at least 4.  There is a unique
  edge $e_1$ incident to $v_1$ that is contained in any path from
  $v_1$ to $v_2$, and similarly an edge incident $e_2$ incident to
  $v_2$. Cutting the two edges $e_1$ and $e_2$ (into unmarked half-edges) yields three connected
  components $\sigma_1,$ $\sigma'$, and $\sigma_2.$ (Note: the graphs $\sigma_1$ and $\sigma_2$ have a single ``central'' vertex, with trivalent subtrees incident to it. The graph $\sigma'$ is itself a trivalent subtree.)

  Let $\alpha_1=\val(v_1)-3$, and let $P_1\subseteq\{1,\ldots,n\}$ be the set
  of marked half-edges in $\sigma_1.$ Similarly let $\alpha_2=\val(v_2)-3$, and let $P_2\subseteq\{1,\ldots,n\}$ be the set
  of marked half-edges in $\sigma_2.$ 
  Then we define $W_n(\sigma):=\{(P_1,\alpha_1),(P_2,\alpha_2)\}.$
  Observe that $W_n(\sigma)\in\Pairs^2_{k,n},$ since:
  \begin{itemize}
  \item $P_1$ and $P_2$ are disjoint (by definition),
  \item $\alpha_1+\alpha_2=k$ (this follows from the fact that
    $\sigma$ is $k$-dimensional), and
  \item $\abs{P_i}\ge\alpha_i+2$ for $i\in\{1,2\}$. (This is equivalent
    to $\abs{P_i}\ge\val(v_i)-1,$ which follows from the fact that by stability, every edge incident to $v_i$, except $e_1$, is on a nonrepeating path from $v_i$ to at least one marked half-edge in $\sigma_1$.)
  \end{itemize}
\end{Def}

\begin{thm}\label{thm:MainThmStrong}
  For any $n>3$ and $2\le k\le n-4$, $Q^{2}_{k,n}$ is isomorphic (as
  an $S_n$-module) to $\Q\Pairs^2_{k,n}$.
\end{thm}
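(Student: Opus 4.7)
The plan is to build an $S_n$-equivariant linear map $\tWPQn \colon \Q\Pairs^2_{k,n} \to Q^2_{k,n}$ that sends each pair $p$ to the class of any representative tree in $W_n^{-1}(p) \subseteq \mathbf{S}^2_{k,n}$, and to prove it is an isomorphism of $S_n$-representations. Surjectivity is immediate from the fact that $\mathbf{S}^2_{k,n}$ spans $Q^2_{k,n}$. So the substantive tasks are (i) \emph{well-definedness} (the class is independent of the chosen representative) and (ii) \emph{injectivity}.

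For (i), the strategy is to connect any two $\sigma, \sigma' \in W_n^{-1}(p)$ by a sequence of ``elementary moves'' performed in $H_{2k}(\Mbar_{0,n},\Q)$. Each such move locally rearranges the trivalent portion of the tree --- inside the subtree attached to $v_1$, inside the subtree attached to $v_2$, or along the path between them --- while preserving $(P_1,\alpha_1)$ and $(P_2,\alpha_2)$. Concretely, contracting an edge of $\sigma$ between two trivalent vertices produces $\tau \in \mathbf{S}_{k+1,n}$ containing a new valence-$4$ vertex $v'$; applying the Kontsevich--Manin relation \eqref{eq:KMRelation} at $v'$, which since $v'$ has valence exactly $4$ has $\mathbf{T} = \emptyset$, yields the bare equality $(AB|CD) = (AC|BD)$ in $H_{2k}$ between two trees in $\mathbf{S}^2_{k,n}$. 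These are $\sigma$ and a ``locally swapped'' tree $\sigma''$ sharing the same $W_n$-image. The remaining content of (i) is the combinatorial claim that iterating such swaps connects any two trees in $W_n^{-1}(p)$.

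For (ii), the plan is to construct, for each $p = \{(P_1,\alpha_1),(P_2,\alpha_2)\} \in \Pairs^2_{k,n}$, a cohomology class $c_p \in H^{2k}(\Mbar_{0,n},\Q)$ --- a natural first candidate being $D_{P_1}^{\alpha_1} D_{P_2}^{\alpha_2}$, possibly corrected by $\psi$-classes at marks adjacent to $v_1$ and $v_2$ --- whose intersection pairing against $\mathbf{S}_{k,n}$ is nonzero on $W_n^{-1}(p)$, vanishes on $W_n^{-1}(p')$ for every $p' \neq p$, and vanishes on $\mathbf{S}_{k,n}^{\ge 3}$. The family $\{c_p\}_{p \in \Pairs^2_{k,n}}$ would then yield linear functionals on $Q^2_{k,n}$ that are dual to $\{\tWPQn(p)\}$, immediately forcing $\tWPQn$ to be injective.

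The principal obstacle is (ii): proving the required vanishing and non-vanishing requires computing intersection numbers of products of boundary divisors with all two-vertex boundary strata of $\Mbar_{0,n}$, and the self-intersections of $D_{P_i}$ introduce delicate excess-intersection contributions. A fallback approach is to replace (ii) by an induction on $n$, exploiting the fact \cite{Ramadas2015} that the filtration $Q_{k,n}^{\ge\bullet}$ is preserved by pushforwards along the forgetful morphisms $\Mbar_{0,n+1} \to \Mbar_{0,n}$, with Theorem \ref{thm:RohiniQ1kn} providing the base case.
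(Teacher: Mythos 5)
Your proposed map $\Q\Pairs^2_{k,n}\to Q^2_{k,n}$, sending $p$ to the class of any representative in $W_n^{-1}(p)$, is not in general well-defined in $Q^2_{k,n}$, and this is the central gap. The elementary moves you propose --- contract an edge between two \emph{trivalent} vertices and apply the Kontsevich--Manin relation at the resulting valence-$4$ vertex --- can only rearrange trivalent subtrees (this is Proposition~\ref{prop:RearrangeFullyDegenerateSubtree}); they can never change which flags are directly incident to $v_1$ or $v_2$. But $W_n$ records only $(P_i,\alpha_i)$, not the partition of $P_i$ into half-edges sitting on $v_i$ versus on an attached trivalent subtree, so the $W_n$-fibers are strictly larger than the orbits of your moves. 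The move needed to bridge this is \eqref{eq:alg}, obtained by contracting an edge joining $v_1$ to an \emph{adjacent} trivalent vertex and applying the Kontsevich--Manin relation at the resulting valence-$\ge5$ vertex; but that relation carries extra terms $(AB\mathbf{T}|Ce_1)$, $(AC\mathbf{T}|Be_1)$ which do \emph{not} vanish in $Q^2_{k,n}$: they lie in $(\mathbf{S}^2_{k,n})^{\ge b+1}$, not in $\mathbf{S}^{\ge3}_{k,n}$. To deal with this the paper introduces a second $S_n$-equivariant filtration (Definition~\ref{Def:Filtrations}), graded by the number $b$ of marked half-edges on the central subtree $\sigma'$, and proves the isomorphism $(Q^2_{k,n})^b\cong\Q(\Pairs^2_{k,n})^b$ on each graded piece; semisimplicity of $\Q[S_n]$-modules then assembles the theorem. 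Well-definedness of your assignment holds only at the graded level, never in $Q^2_{k,n}$ directly, so without this finer filtration the map you want does not exist.

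The injectivity half also diverges from the paper. The paper does not construct dual cohomology classes; it constructs an $S_n$-equivariant map in the other direction, $\bar{W_{n,b}}:(Q^2_{k,n})^b\to\Q(\Pairs^2_{k,n})^b$ (Lemma~\ref{lem:QToP}), by descending an explicit linear map $\tWPQn$ on $\Q\mathbf{S}_{k,n}$, found by computer experimentation (equation~\eqref{eq:WPTilde1}), and then verifying through a long case analysis that $\tWPQn$ sends every Kontsevich--Manin relation into $\Q(\Pairs^2_{k,n})^{\ge1}$. Your fallback of induction via forgetful pushforwards is essentially the paper's inductive step (Proposition~\ref{prop:PairsBIsom}), but the base case $b=0$ is exactly this explicit $\tWPQn$ construction --- not Theorem~\ref{thm:RohiniQ1kn}, which concerns the different graded piece $Q^1_{k,n}$ and cannot start an induction on $Q^2$. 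Your primary candidate $D_{P_1}^{\alpha_1}D_{P_2}^{\alpha_2}$ has, as you note, excess-intersection problems; beyond those, no argument is offered that any corrected class vanishes on $\mathbf{S}^{\ge3}_{k,n}$ or separates distinct $W_n$-fibers, so that route remains speculative.
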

Together with Theorem \ref{thm:RohiniQ1kn} and the fact that
$H_4(\Mbar_{0,n},\Q)$ consists of the two graded pieces $Q^1_{2,n}$
and $Q^2_{2,n},$ Theorem \ref{thm:MainThmStrong} implies:
\begin{thm}\label{thm:MainThmWeak}
  For $n\ge3,$
  $H_4(\Mbar_{0,n},\Q)\cong\Q\Pairs^1_{2,n}\oplus\Q\Pairs^2_{2,n}$ as $S_n$-representations.
\end{thm}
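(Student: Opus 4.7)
Theorem \ref{thm:MainThmWeak} reduces immediately to Theorem \ref{thm:MainThmStrong} in the case $k=2$: by \eqref{eq:HDecomp}, $H_4(\Mbar_{0,n},\Q) = Q^1_{2,n}\oplus Q^2_{2,n}$, and Theorem \ref{thm:RohiniQ1kn} (resp.\ Theorem \ref{thm:MainThmStrong}) identifies the first (resp.\ second) summand with $\Q\Pairs^1_{2,n}$ (resp.\ $\Q\Pairs^2_{2,n}$). Hence all of the real work is in Theorem \ref{thm:MainThmStrong}, which I would prove as follows.

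The plan is to realize the desired isomorphism $Q^2_{k,n}\xrightarrow{\sim}\Q\Pairs^2_{k,n}$ as the linearization of $W_n$. I first extend $W_n$ to an $S_n$-equivariant linear map $\tWPQn:\Q\mathbf{S}^{\geq 2}_{k,n}\to\Q\Pairs^2_{k,n}$ by setting $\tWPQn(\sigma):=W_n(\sigma)$ if $\sigma\in\mathbf{S}^2_{k,n}$ and $\tWPQn(\sigma):=0$ if $\sigma\in\mathbf{S}^{\geq 3}_{k,n}$. This map is surjective because $W_n$ is. I must then show: (i) $\tWPQn$ descends to a well-defined linear map on $Q^2_{k,n} = Q^{\geq 2}_{k,n}/Q^{\geq 3}_{k,n}$, and (ii) the descended map is injective.

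For (i), I need $\tWPQn$ to vanish on every element of $\Q\mathbf{S}^{\geq 2}_{k,n}$ whose image in $H_{2k}(\Mbar_{0,n},\Q)$ lies in $Q^{\geq 3}_{k,n}$. By Keel-Kontsevich-Manin, such elements come from combinations of KM relations $\mathcal{R}(\sigma,v,A,B,C,D)$ from \eqref{eq:KMRelation}, and I would classify these by the shape of $\lambda_\sigma$. The key observation is that splitting $v$ (of valence $\val(v)$) replaces its contribution $\val(v)-3$ to $\lambda_\sigma$ by two new parts summing to $\val(v)-4$. If $\lambda_\sigma$ has $\ge 3$ parts, every term of $\mathcal{R}$ already lies in $\mathbf{S}^{\ge 3}_{k,n}$ and is killed. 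If $\lambda_\sigma=(\alpha_1,\alpha_2)$ has two parts, splitting $v$ either preserves the two big vertices (so all $2$-part terms share a single $W_n$-fiber) or introduces a third; cancellation of the $2$-part contributions follows from the alternating signs in \eqref{eq:KMRelation}. The trickiest case is $\lambda_\sigma=(k+1)$: here $\sigma$ has a unique valence-$(k+4)$ vertex, whose splittings produce $2$-part terms whose $W_n$-image depends intricately on how the partition $\mathbf{U}_1\sqcup\mathbf{U}_2$ of incident half-edges distributes the marks into $P_1$ versus $P_2$. Within each $W_n$-fiber, cancellation should follow from an explicit sign-reversing involution matching terms of the $(AB\mathbf{U}_1|CD\mathbf{U}_2)$-sum with those of the $(AC\mathbf{U}_1|BD\mathbf{U}_2)$-sum.

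For (ii), I would construct a right inverse $s:\Q\Pairs^2_{k,n}\to Q^2_{k,n}$ by choosing, for each pair $\{(P_1,\alpha_1),(P_2,\alpha_2)\}$, a canonical tree $\sigma_{\mathrm{can}}\in W_n^{-1}(\{(P_1,\alpha_1),(P_2,\alpha_2)\})$ (for instance a ``caterpillar''-shaped representative using a fixed total order on $\{1,\ldots,n\}$) and setting $s(\{(P_1,\alpha_1),(P_2,\alpha_2)\}):=[\sigma_{\mathrm{can}}]$. Once (i) is established, $\tWPQn\circ s=\mathrm{id}$, so $\tWPQn$ is an isomorphism; $S_n$-equivariance of the resulting bijection is then automatic. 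The principal obstacle is the case analysis in (i), especially the third subcase with $\lambda_\sigma=(k+1)$, where the required combinatorial identity in $\Q\Pairs^2_{k,n}$ couples splittings of $v$ with the assignment of marks to the two sides of the resulting tree, and designing the correct sign-reversing involution is the combinatorial heart of the argument.
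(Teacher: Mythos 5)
The reduction of Theorem \ref{thm:MainThmWeak} to Theorems \ref{thm:RohiniQ1kn} and \ref{thm:MainThmStrong} via \eqref{eq:HDecomp} is correct and is exactly the paper's route. The sketch of Theorem \ref{thm:MainThmStrong}, however, has a real gap in step (i), and it is precisely the obstruction the paper highlights. You define $\tWPQn$ only on $\Q\mathbf{S}^{\ge2}_{k,n}$ (as $W_n$ on $\mathbf{S}^2_{k,n}$, zero on $\mathbf{S}^{\ge3}_{k,n}$), and then propose to verify vanishing on the kernel by checking Kontsevich--Manin relations $\mathcal{R}(\sigma,v,A,B,C,D)$ one at a time, sorted by $\lambda_\sigma$. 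But the kernel of $\Q\mathbf{S}^{\ge2}_{k,n}\to Q^2_{k,n}$ is $R^{\ge2}+\Q\mathbf{S}^{\ge3}_{k,n}$ with $R^{\ge2}=R\cap\Q\mathbf{S}^{\ge2}_{k,n}$, and this subspace is \emph{not} spanned by KM relations that lie in $\Q\mathbf{S}^{\ge2}_{k,n}$: the filtration does not interact predictably with the relations, which the authors state explicitly. In your key third subcase (a unique vertex $v$ with $\val(v)=k+4$, i.e.\ Case VI of the paper), the relation $\mathcal{R}$ has terms $(AB|CD\mathbf{T})$, $(AB\mathbf{T}|CD)$, $(AC|BD\mathbf{T})$, $(AC\mathbf{T}|BD)$ lying in $\mathbf{S}^1_{k,n}$, so $\mathcal{R}\notin\Q\mathbf{S}^{\ge2}_{k,n}$ and your $\tWPQn$ is not even defined on it. Your hoped-for sign-reversing involution among the $\mathbf{S}^2$-terms alone cannot produce the required cancellation: in the paper's explicit Case VI computation the coefficient of a fixed $\Gamma\in(\Pairs^2_{k,n})^0$ is $1-1/2-1/2=0$, where the $-1/2$ contributions come precisely from the $\mathbf{S}^1$-terms via the nontrivial formula \eqref{eq:WPTilde1} that extends $\tWPQn$ to $\mathbf{S}^1_{k,n}$. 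Without that extension, and without the finer filtration by $b$ (which is needed because even the extended $\tWPQn$ sends each $\mathcal{R}$ into $\Q(\Pairs^2_{k,n})^{\ge1}$ rather than to zero, so one must argue graded-piece by graded-piece and run the induction on $b$ of Proposition \ref{prop:PairsBIsom}), step (i) does not go through.

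There is also a smaller logical slip in step (ii): $\tWPQn\circ s=\mathrm{id}$ shows $\tWPQn$ is surjective and $s$ injective, but does not by itself give injectivity of $\tWPQn$. You still need $s\circ\tWPQn=\mathrm{id}$, i.e.\ that $[\sigma]=[\sigma_{\mathrm{can}}]$ in $Q^2_{k,n}$ whenever $W_n(\sigma)=W_n(\sigma_{\mathrm{can}})$. That is exactly the content of Lemma \ref{lem:PToQ}, proved via the move \eqref{eq:alg} (a degenerate KM relation) together with Proposition \ref{prop:RearrangeFullyDegenerateSubtree}; the caterpillar-normal-form idea is in the right spirit but needs this argument to actually identify classes in $Q^2_{k,n}$.
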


\begin{cor}\label{cor:MainCor}
    For $n\ge3$ and $k\ge0,$ $\Q\Pairs^1_{k,n}\oplus\Q\Pairs^2_{k,n}$ is a subrepresentation of $H_{2k}(\Mbar_{0,n},\Q)$.
\end{cor}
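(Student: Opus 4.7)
The plan is to obtain Corollary \ref{cor:MainCor} as an immediate consequence of Theorems \ref{thm:RohiniQ1kn} and \ref{thm:MainThmStrong}, together with the $S_n$-equivariant decomposition \eqref{eq:HDecomp} from Definition \ref{Def:RamadasFiltration}. The key observation is that since $\Q[S_n]$ is semisimple (Maschke's theorem), the filtration $H_{2k}(\Mbar_{0,n},\Q)=Q^{\ge1}_{k,n}\supseteq Q^{\ge2}_{k,n}\supseteq\cdots$ splits as a direct sum of its graded pieces, and any direct summand is automatically a subrepresentation.

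First I would invoke the decomposition
$$H_{2k}(\Mbar_{0,n},\Q)\cong\bigoplus_{r=1}^{\min\{k,n-2-k\}}Q^r_{k,n}$$
from Definition \ref{Def:RamadasFiltration}. Next, I would substitute in Theorems \ref{thm:RohiniQ1kn} and \ref{thm:MainThmStrong} to identify the first two graded pieces: $Q^1_{k,n}\cong\Q\Pairs^1_{k,n}$ (whenever $k\ge 0$ and $n\ge 3$) and $Q^2_{k,n}\cong\Q\Pairs^2_{k,n}$ (whenever $n>3$ and $2\le k\le n-4$). Since each of $Q^1_{k,n}$ and $Q^2_{k,n}$ appears as a direct summand in the semisimple $S_n$-representation $H_{2k}(\Mbar_{0,n},\Q)$, their direct sum $\Q\Pairs^1_{k,n}\oplus\Q\Pairs^2_{k,n}$ embeds as a subrepresentation.

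The only work is a routine verification of edge cases, ensuring that whenever an index $r\in\{1,2\}$ falls outside the range of summation in the decomposition, the corresponding set $\Pairs^r_{k,n}$ is empty. Specifically, $\Pairs^1_{k,n}$ requires a subset $A\subseteq\{1,\ldots,n\}$ with $|A|\ge k+3$, so $\Pairs^1_{k,n}=\emptyset$ whenever $k>n-3$; and $\Pairs^2_{k,n}$ requires disjoint $P_1,P_2\subseteq\{1,\ldots,n\}$ with $|P_1|+|P_2|\ge\alpha_1+\alpha_2+4=k+4$ and $\alpha_i\ge 1$, so $\Pairs^2_{k,n}=\emptyset$ unless $2\le k\le n-4$. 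In each range where the decomposition omits $Q^1_{k,n}$ or $Q^2_{k,n}$, the corresponding $\Q\Pairs$ term vanishes, so the statement remains valid (and trivial).

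There is no substantive obstacle here beyond this bookkeeping; the content of the corollary is entirely packaged inside Theorems \ref{thm:RohiniQ1kn} and \ref{thm:MainThmStrong} together with the semisimplicity of $\Q[S_n]$. If anything, the mildest point of care is to note explicitly that one needs semisimplicity (as opposed to just the filtration) to upgrade each $Q^r_{k,n}$ from a subquotient to a subrepresentation of $H_{2k}(\Mbar_{0,n},\Q)$.
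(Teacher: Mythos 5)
Your proof is correct and is essentially the paper's own (implicit) argument: the corollary is stated without separate proof as an immediate consequence of the decomposition \eqref{eq:HDecomp} together with Theorems \ref{thm:RohiniQ1kn} and \ref{thm:MainThmStrong}, with semisimplicity of $\Q[S_n]$ upgrading the graded pieces from subquotients to direct summands, exactly as you say. (One pedantic remark: your edge-case bookkeeping overlooks $k=0$, where $\Pairs^1_{0,n}$ is \emph{not} empty even though the decomposition has no $r=1$ summand --- but this degenerate case is glossed over in the paper's own statements as well, so it does not distinguish your argument from theirs.)
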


\section{Proof of Theorem \ref{thm:MainThmStrong}}
\subsection{Outline of Proof}\label{sec:ProofOutline}
We first introduce (Definition
\ref{Def:Filtrations}) filtrations
\begin{align*}
  \mathbf{S}^2_{k,n}&=(\mathbf{S}^2_{k,n})^{\ge0}\supseteq
                  (\mathbf{S}^2_{k,n})^{\ge1}\supseteq\cdots\supseteq(\mathbf{S}^2_{k,n})^{\ge
                  n-k-4}\\
  \Pairs^2_{k,n}&=(\Pairs^2_{k,n})^{\ge0}\supseteq
                  (\Pairs^2_{k,n})^{\ge1}\supseteq\cdots\supseteq(\Pairs^2_{k,n})^{\ge
                  n-k-4}\\
  Q^{2}_{k,n}&=(Q^{2}_{k,n})^{\ge0}\supseteq
               (Q^{2}_{k,n})^{\ge1}\supseteq\cdots\supseteq (Q^{2}_{k,n})^{\ge
               n-k-4},
\end{align*}
with graded pieces
\begin{align*}
  (\mathbf{S}^2_{k,n})^{b}&=(\mathbf{S}^2_{k,n})^{\ge
                            b}\setminus(\mathbf{S}^2_{k,n})^{\ge b+1}\\
  (\Pairs^2_{k,n})^{b}&=(\Pairs^2_{k,n})^{\ge
                            b}\setminus(\Pairs^2_{k,n})^{\ge b+1}\\
  (Q^2_{k,n})^{b}&=(Q^2_{k,n})^{\ge
                            b}/(Q^2_{k,n})^{\ge b+1}.
\end{align*}
Then $W_n$ will induce a map
$W_{n,b}:(\mathbf{S}^2_{k,n})^{b}\to(\Pairs^2_{k,n})^{b}$. We will
have surjective maps:
\begin{center}
  \begin{tikzcd}
    &\Q(\mathbf{S}^2_{k,n})^{b}\arrow[dl,"\rho_{n,b}",swap]\arrow[dr,"W_{n,b}"]&\\
    (Q^{2}_{k,n})^{b}&&\Q(\Pairs^2_{k,n})^{b}
  \end{tikzcd}
\end{center}
where we abuse notation and use $W_{n,b}$ to refer to the map induced on free $\Q$-vector spaces by
$W_{n,b}$. We show (Lemma \ref{lem:QToP}) that $W_{n,b}$ factors
equivariantly through $\rho_{n,b}$, and (Lemma \ref{lem:PToQ}) that
$\rho_{n,b}$ factors equivariantly through $W_{n,b}$; the two
resulting maps $(Q^{2}_{k,n})^{b}\to\Q(\Pairs^2_{k,n})^{b}$ and
$\Q(\Pairs^{2}_{k,n})^{b}\to(Q^2_{k,n})^{b}$ must therefore be
inverses, by a straightforward diagram chase.

\subsection{Proofs of Lemmas \ref{lem:QToP} and \ref{lem:PToQ}}\label{sec:Proof}

\begin{Def}\label{Def:Filtrations}
  For $\sigma\in\mathbf{S}^2_{k,n}$, let $\sigma_1$, $\sigma_2$, and
  $\sigma'$ be as in Definition \ref{Def:WPDef}. Then for
  $0\le b\le n-k-4$, we let $(\mathbf{S}^2_{k,n})^{\ge b}$ denote the
  subset consisting of those trees $\sigma$ such that $\sigma'$ has
  at least $b$ marked half-edges. Let
  \begin{align*}
    (Q^{2}_{k,n})^{\ge b}&=\Span((\mathbf{S}^2_{k,n})^{\ge b})\subseteq
                           Q^2_{k,n}\\
    (Q^{2}_{k,n})^{b}&=(Q^{2}_{k,n})^{\ge b}/(Q^{2}_{k,n})^{\ge b+1}.
  \end{align*}
  Let
  \begin{align*}
    (\Pairs^2_{k,n})^{\ge b}&=W_{n}((\mathbf{S}^2_{k,n})^{\ge b})\\
    (\Pairs^2_{k,n})^{b}&=(\Pairs^2_{k,n})^{\ge b}\setminus(\Pairs^2_{k,n})^{\ge b+1}.
  \end{align*}
\end{Def}
We write $\rho_{n,b}$ for the natural map
$\Q(\mathbf{S}^2_{k,n})^{b}\to(Q^2_{k,n})^{b}$. Observe that
$\rho_{n,b}$ is surjective, and that
$W_{n}|_{(\mathbf{S}^2_{k,n})^{b}}\subseteq(\Pairs^2_{k,n})^{b}$ (so
$W_{n,b}$ is well-defined). We now prove:



\begin{lem}\label{lem:QToP}
  Fix nonnegative integers $n\ge4,$ $k\le n-3$, and $b\le n-k-4$. The
  map $W_{n,b}:\Q(\mathbf{S}^2_{k,n})^{b}\to\Q(\Pairs^2_{k,n})^{b}$
  descends equivariantly to an $S_n$-equivariant map
  $\bar{W_{n,b}}:(Q^{2}_{k,n})^b\to\Q(\Pairs^2_{k,n})^{b}$. 
\end{lem}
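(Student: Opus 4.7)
The plan is to verify $\ker(\rho_{n,b}) \subseteq \ker(W_{n,b})$; equivariance then follows because all maps in sight are $S_n$-equivariant. Unwinding the definitions of $Q^2_{k,n} = Q^{\ge 2}_{k,n}/Q^{\ge 3}_{k,n}$ and its filtration, any $x \in \Q(\mathbf{S}^2_{k,n})^b$ lying in $\ker(\rho_{n,b})$ admits a decomposition in $\Q\mathbf{S}_{k,n}$ of the form
\[ x = y + z + \textstyle\sum_m f_m \mathcal{R}_m, \]
with $y \in \Q(\mathbf{S}^2_{k,n})^{\ge b+1}$, $z \in \Q\mathbf{S}^{\ge 3}_{k,n}$, $f_m \in \Q$, and each $\mathcal{R}_m$ a Kontsevich--Manin relation \eqref{eq:KMRelation}.

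I would extend $W_n$ to an $S_n$-equivariant linear map $\tWPQn\colon \Q\mathbf{S}_{k,n} \to \Q\Pairs^2_{k,n}$: set $\tWPQn = W_n$ on $\mathbf{S}^2_{k,n}$, $\tWPQn = 0$ on $\mathbf{S}^{\ge 3}_{k,n}$, and define $\tWPQn$ on each $\sigma \in \mathbf{S}^1_{k,n}$ by an explicit sum over ways of splitting $\sigma$'s unique valence-$\ge 4$ vertex into two valence-$\ge 4$ vertices. The key identity to establish is $\tWPQn(\mathcal{R}) = 0$ in $\Q\Pairs^2_{k,n}$ for every Kontsevich--Manin relation $\mathcal{R}$; this is what forces the precise formula for $\tWPQn$ on $\mathbf{S}^1_{k,n}$. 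Granted this identity, applying $\tWPQn$ to the decomposition of $x$ gives $W_{n,b}(x) = \tWPQn(x) = \tWPQn(y) \in \Q(\Pairs^2_{k,n})^{\ge b+1}$; since $W_{n,b}(x)$ already lies in the subspace $\Q(\Pairs^2_{k,n})^b$ and $(\Pairs^2_{k,n})^b \cap (\Pairs^2_{k,n})^{\ge b+1}=\emptyset$, this forces $W_{n,b}(x) = 0$.

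To construct $\tWPQn$ on $\mathbf{S}^1_{k,n}$ and verify $\tWPQn(\mathcal{R}) = 0$, I would case-analyze the parent tree $\tau$ in $\mathcal{R}(\tau,v,A,B,C,D)$ according to the number of valence-$\ge 4$ vertices of $\tau$. If $\tau \in \mathbf{S}^{\ge 4}_{k+1,n}$, every term of $\mathcal{R}$ lies in $\mathbf{S}^{\ge 3}_{k,n}$ and the identity holds trivially. For $\tau \in \mathbf{S}^1_{k+1,n}, \mathbf{S}^2_{k+1,n}$, or $\mathbf{S}^3_{k+1,n}$ (with $v$ of valence exactly $4$ in the last case), I would compute $W_n$ on the $\mathbf{S}^2$-terms of $\mathcal{R}$ using the partition of $\{1,\ldots,n\}$ by the subtrees incident to $v$ in $\tau$, then read off what $\tWPQn$ must be on each $\mathbf{S}^1$-term of $\mathcal{R}$ to make the total vanish. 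The main obstacle is that a given $\sigma \in \mathbf{S}^1_{k,n}$ appears in KM relations arising from many different pairs $(\tau, v)$, so the cancellations forced by these relations must be compatible across all of them; the subcase $\tau \in \mathbf{S}^2_{k+1,n}$ is the most delicate, since here splitting $v$ can shuffle marked half-edges between $P_1, P_2$ and the middle subtree $\sigma'$ of the resulting $\mathbf{S}^2$-trees, mixing different $b$-filtration levels in a way that the definition of $\tWPQn$ on $\mathbf{S}^1_{k,n}$ must accommodate uniformly.
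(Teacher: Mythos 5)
Your overall strategy is the right one, and the extension $\tWPQn$ you sketch is exactly what the paper builds in the proof of Proposition~\ref{prop:Pairs0Isom}. The fatal problem is the identity you pivot on: $\tWPQn(\mathcal{R})=0$ for every Kontsevich--Manin relation $\mathcal{R}$ cannot hold for \emph{any} lift $\tWPQn$ with $\tWPQn=W_n$ on $\mathbf{S}^2_{k,n}$ and $\tWPQn=0$ on $\mathbf{S}^{\ge 3}_{k,n}$. Take $\tau\in\mathbf{S}_{k+1,n}$ with exactly two vertices $v,v'$ of valence $\ge 4$ joined by an edge $e$, with $\val(v)>4$; choose distinct flags $A,B,C$ at $v$ other than $e$ and set $D=e$. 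Modulo $\mathbf{S}^{\ge 3}_{k,n}$, the relation $\mathcal{R}(\tau,v,A,B,C,e)$ reduces to $(AB\mathbf{T}|Ce)+(AB|Ce\mathbf{T})-(AC\mathbf{T}|Be)-(AC|Be\mathbf{T})$. Now $W_n(AB|Ce\mathbf{T})=W_n(AC|Be\mathbf{T})$, so your identity would force $W_n(AB\mathbf{T}|Ce)=W_n(AC\mathbf{T}|Be)$; but the $P_1$-sets of those two trees are $A\cup B\cup\bigcup_{S\in\mathbf{T}}S$ and $A\cup C\cup\bigcup_{S\in\mathbf{T}}S$, which differ whenever $B\ne C$ as subsets of $\{1,\dots,n\}$. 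What is actually true, and all the paper proves (condition~\ref{i4}), is the weaker $\tWPQn(\mathcal{R})\in\Q(\Pairs^2_{k,n})^{\ge 1}$: here both surviving terms lie in filtration level~$\ge 1$ because the middle segment $\sigma'$ of $(AB\mathbf{T}|Ce)$, resp.\ $(AC\mathbf{T}|Be)$, picks up the marked half-edges of $C$, resp.\ $B$.

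That weaker fact only kills the error terms $f_m\tWPQn(\mathcal{R}_m)$ in your decomposition when $b=0$: for $b\ge 1$ your final step yields $W_{n,b}(x)\in\Q(\Pairs^2_{k,n})^b\cap\Q(\Pairs^2_{k,n})^{\ge 1}$, which is no constraint at all. The paper therefore treats $b=0$ as a base case and supplies a second, independent argument (Proposition~\ref{prop:PairsBIsom}) for the inductive step in $b$: the forgetful pushforward $(\pi_{n+1})_*$ carries $(Q^2_{k,n+1})^{\ge b+1}$ into $(Q^2_{k,n})^{\ge b}$, so any $r\in\ker(\rho_{n+1,b+1})$ has $W_{n+1,b+1}(r)$ in the kernel of the induced projection to $\Q(\Pairs^2_{k,n})^b$, i.e.\ supported on pairs with $n+1\in P_1\cup P_2$; varying the forgotten point and using $S_{n+1}$-equivariance pins $W_{n+1,b+1}(r)$ into the span of pairs with $P_1\cup P_2=\{1,\dots,n+1\}$, which is zero once $b+1>0$. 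Your proposal is missing this second ingredient entirely, and the obstruction above shows it cannot be avoided by a cleverer choice of $\tWPQn$.
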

  
The proof is by induction on $b$. The base case $b=0$ (for all $n$) is
Proposition \ref{prop:Pairs0Isom}. The inductive step is Proposition
\ref{prop:PairsBIsom}.
\begin{prop}\label{prop:Pairs0Isom}
  $W_{n,0}:\Q(\mathbf{S}^2_{k,n})^{0}\to\Q(\Pairs^2_{k,n})^{0}$
  descends equivariantly to an $S_n$-equivariant map
  $\bar{W_{n,0}}:(Q^{2}_{k,n})^0\to\Q(\Pairs^2_{k,n})^{0}$.
\end{prop}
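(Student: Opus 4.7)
The strategy is to prove $\ker\rho_{n,0}\subseteq\ker W_{n,0}$, which yields the desired $S_n$-equivariant descent $\bar{W}_{n,0}$. Unwinding definitions, an element $x\in\Q(\mathbf{S}^2_{k,n})^0$ lies in $\ker\rho_{n,0}$ if and only if its image in $Q^2_{k,n}$ lies in $(Q^2_{k,n})^{\ge 1}$; equivalently, $x$ equals the $(\mathbf{S}^2_{k,n})^0$-component of some combination $\sum_i c_i\mathcal{R}_i$ of Kontsevich--Manin relations whose $\mathbf{S}^1_{k,n}$- and $\mathbf{S}^{\ge 3}_{k,n}$-components assemble into a class in $Q^{\ge 3}_{k,n}$ (the $(\mathbf{S}^2_{k,n})^{\ge 1}$-component is unconstrained, since that piece is already killed by $W_{n,0}$). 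The task therefore reduces to showing that $W_n$ vanishes on the $(\mathbf{S}^2_{k,n})^0$-component of any such combination.

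Next I would classify the Kontsevich--Manin relations $\mathcal{R}(\sigma,v,A,B,C,D)$ contributing nontrivially to $(\mathbf{S}^2_{k,n})^0$. Writing $a_v=\val(v)-3$ and splitting $v$ into $v',v''$ with $a_{v'}+a_{v''}=a_v-1$, a term of $\mathcal{R}$ lies in $(\mathbf{S}^2_{k,n})^0$ exactly when the resulting tree has precisely two high-valence vertices and they are adjacent. The parent $\sigma\in\mathbf{S}_{k+1,n}$ must therefore have $1$, $2$, or $3$ high-valence vertices: if $\sigma\in\mathbf{S}^1_{k+1,n}$, both new vertices are high-valence and automatically adjacent via the new edge $e'$; if $\sigma\in\mathbf{S}^2_{k+1,n}$ with $a_v\ge 2$, the unique new high-valence vertex pairs with the preexisting one; and if $\sigma\in\mathbf{S}^3_{k+1,n}$ with $a_v=1$, both new vertices are trivalent and the two preexisting high-valence vertices survive as the distinguished pair. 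In each case I would compute $W_n$ on the $(\mathbf{S}^2_{k,n})^0$-component explicitly as a signed combination of pair classes $\{(R_A\cup R_B\cup R_{U_1},|U_1|),(R_C\cup R_D\cup R_{U_2},|U_2|)\}$ indexed by ordered partitions $(U_1,U_2)$ of $\mathbf{T}$.

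The core step is to show that the cancellation of the $\mathbf{S}^1_{k,n}$- and $\mathbf{S}^{\ge 3}_{k,n}$-parts of $\sum_i c_i\mathcal{R}_i$ forces the $W_n$-image of its $(\mathbf{S}^2_{k,n})^0$-part to vanish. My plan is to construct an $S_n$-equivariant linear extension $\widetilde{W}_n\colon\Q\mathbf{S}_{k,n}\to\Q(\Pairs^2_{k,n})^0$ satisfying: (a) $\widetilde{W}_n$ agrees with $W_n$ on $(\mathbf{S}^2_{k,n})^0$; (b) $\widetilde{W}_n$ vanishes on $(\mathbf{S}^2_{k,n})^{\ge 1}$ and on $\mathbf{S}^{\ge 3}_{k,n}$; and (c) $\widetilde{W}_n$ annihilates every KM relation $\mathcal{R}$. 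Given such an extension, any combination with cancelling $\mathbf{S}^1_{k,n}$- and $\mathbf{S}^{\ge 3}_{k,n}$-parts gives $0=\sum_i c_i\widetilde{W}_n(\mathcal{R}_i)$, which by (a)--(b) collapses to $W_{n,0}$ applied to the $(\mathbf{S}^2_{k,n})^0$-projection, yielding the desired vanishing.

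The main obstacle is defining $\widetilde{W}_n$ on $\mathbf{S}^1_{k,n}$-trees so as to satisfy (c). For $\tau\in\mathbf{S}^1_{k,n}$ with high-valence vertex $v_\tau$ of valence $k+3$, the natural ansatz is a signed sum over ``virtual'' bipartitions of $v_\tau$'s incidences into two groups, each large enough to play the role of a high-valence side, producing formal pair classes in $\Q(\Pairs^2_{k,n})^0$. Condition (c) then becomes a combinatorial identity coupling the bipartition data from the KM split of $\sigma$ with the virtual splits of the resulting $\mathbf{S}^1_{k,n}$-terms. Verifying this identity is the technical heart of the argument; the simplification at $b=0$ (where $\sigma'$ is empty, so only adjacent high-valence configurations appear) together with $S_n$-equivariance should keep the case analysis tractable, though the bookkeeping in the $\sigma\in\mathbf{S}^2_{k+1,n}$ and $\sigma\in\mathbf{S}^3_{k+1,n}$ cases is genuinely intricate.
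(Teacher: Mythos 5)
Your overall strategy matches the paper's proof exactly: reduce the descent to finding an extension $\widetilde{W}_n\colon\Q\mathbf{S}_{k,n}\to\Q(\Pairs^2_{k,n})^0$ (or its analogue valued in $\Q\Pairs^2_{k,n}$ and read modulo $\Q(\Pairs^2_{k,n})^{\ge 1}$) that agrees with $W_{n,0}$ on $(\mathbf{S}^2_{k,n})^0$, vanishes on $(\mathbf{S}^2_{k,n})^{\ge1}$ and $\mathbf{S}^{\ge3}_{k,n}$, and kills all Kontsevich--Manin relations. Your classification of which parent trees $\sigma\in\mathbf{S}_{k+1,n}$ can contribute to the $(\mathbf{S}^2_{k,n})^0$-piece also agrees with the case division in the paper.

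However, as written this is a plan rather than a proof, and the gap is precisely the step you flag as ``the technical heart.'' You do not define $\widetilde{W}_n$ on $\mathbf{S}^1_{k,n}$, only describe the shape of the ansatz. The paper's definition is: for $\sigma\in\mathbf{S}^1_{k,n}$ with one high-valence vertex $v$ inducing a set partition $\Pi$ of $\{1,\ldots,n\}$ into $k+3$ parts, set
\begin{equation*}
\widetilde{W}_n(\sigma)=\sum_{\substack{\Gamma=\{(P_1,\alpha_1),(P_2,\alpha_2)\}\in(\Pairs^2_{k,n})^0\\ P_1\text{ a union of parts of }\Pi}}\frac{(-1)^{e_\Pi(\Gamma)}}{2}\,\Gamma,
\qquad e_\Pi(\Gamma)=\min\{s_1-\alpha_1,\, s_2-\alpha_2\},
\end{equation*}
where $s_i$ is the number of parts of $\Pi$ making up $P_i$. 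The authors explicitly say they found this formula by computer experimentation and that they do not know a conceptual explanation; it is not, as your proposal suggests, something that drops out of a ``natural ansatz.'' Moreover the verification that this kills KM relations --- especially your Case $\sigma\in\mathbf{S}^1_{k+1,n}$, which is the paper's Case~VI --- requires matching the half-integer coefficients across three distinct families of contributing terms and splits into many sub-cases depending on how $A,B,C,D$ and the parts of $\mathbf{T}$ distribute between $P_1$ and $P_2$. Without the explicit coefficients and at least a sketch of how the signs cancel (the key being that the parity of $e_\Pi$ shifts by exactly one between the two $\mathbf{S}^1$-terms $(AB|CD\mathbf{T})$ and $(AB\mathbf{T}|CD)$, forcing cancellation), the proposal does not establish the proposition.
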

The proof is complicated, so we first give a discussion. Note that
  $$(Q^{2}_{k,n})^0=\frac{\Q\mathbf{S}^{\ge2}_{k,n}}{R^{\ge2}+\Q\mathbf{S}^{\ge3}_{k,n}+\Q(\mathbf{S}^{2}_{k,n})^{\ge1}}.$$
  Here
  $R^{\ge2}:=R\cap\Q\mathbf{S}^{\ge2}_{k,n}\subseteq\Q\mathbf{S}^{\ge2}_{k,n}$,
  where $R\subseteq\Q\mathbf{S}_{k,n}$ is the kernel of the map $\Q\mathbf{S}_{k,n}\to H_{2k}(\Mbar_{0,n},\Q)$. It is difficult to work directly with $R^{\ge2}$, due to the fact
  that the filtration $(\mathbf{S}^{\ge2}_{k,n})^{\bullet}$ does not interact
  in a predictable way with the Kontsevich-Manin relations \eqref{eq:KMRelation}. Our strategy is as follows. We will find a map
  $\tWPQn:\Q\mathbf{S}_{k,n}\to\Q\Pairs^2_{k,n}$ such
  that:
  \begin{enumerate}[label=(\roman*)]
  \item The restriction of $\tWPQn$ to
    $\Q(\mathbf{S}^{2}_{k,n})^0$ is $W_{n,0}$,\label{i1}
  \item The image of the restriction of $\tWPQn$ to
    $\Q(\mathbf{S}^{2}_{k,n})^{\ge1}$ lies in
    $\Q(\Pairs^2_{k,n})^{\ge1}$,\label{i2}
  \item The restriction of $\tWPQn$ to
    $\Q\mathbf{S}^{\ge3}_{k,n}$ is zero, and\label{i3}
  \item For any Kontsevich-Manin relation $\mathcal{R}\in\Q\mathbf{S}_{k,n},$
    $\tWPQn({\mathcal{R}})\in\Q(\Pairs^2_{k,n})^{\ge1}$.\label{i4}
  \end{enumerate}
  Conditions \ref{i2}, \ref{i3}, and \ref{i4} imply that
  $\tWPQn$ descends to a map
  \begin{align*}
    \frac{\Q\mathbf{S}_{k,n}}{R+\Q\mathbf{S}^{\ge3}_{k,n}+\Q(\mathbf{S}^{2}_{k,n})^{\ge1}}\to\Q(\Pairs^2_{k,n})^0.
  \end{align*}
  Restricting to
  $\Q\mathbf{S}^{\ge2}_{k,n}\subseteq\Q\mathbf{S}_{k,n}$, Condition
  \ref{i1} gives a map $\bar{W_{n,0}}:(Q^{2}_{k,n})^0\to\Q(\Pairs^2_{k,n})^{0}$
  descended from $W_{n,0}$.
  \begin{rem}
    The existence of a map satisfying conditions \ref{i1} -- \ref{i4} is quite mysterious to us; we found $\tWPQn$ by computer experimentation. The fact that the definition of $\tWPQn$ (see \eqref{eq:WPTilde1} below) is quite complicated makes us wonder if there is some simpler, more conceptual description that might simplfy the following proof.
  \end{rem}
\begin{proof}[Proof of Proposition \ref{prop:Pairs0Isom}]
  Let $\sigma\in\mathbf{S}^1_{k,n}$, and let $v$ be the unique vertex of
  $\sigma$ with valence $\ge4.$ Then $\val(v)=k+3.$ We define a set partition $\Pi\vdash\{1,\ldots,n\}$ induced by $\sigma$, where $i_1$ and $i_2$ are in the same part if and only if the corresponding marked half-edges are in the same connected component of the complement of $v$ in $\sigma.$ Note that $\Pi$ has $k+3$ parts. 
  
  Suppose $\Gamma=\{(P_1,\alpha_1),(P_2,\alpha_2)\}\in(\Pairs^2_{k,n})^0$ is such that $P_1$ is a union of $s_1$ parts of $\Pi$ (so necessarily $P_2$ is a union of the remaining $s_2:=k+3-s_1$ parts of $\Pi$). 
  Let $e_\Pi(\Gamma)=\min\{s_1-\alpha_1,s_2-\alpha_2\}.$ We define:
  \begin{align}\label{eq:WPTilde1}
    \tWPQn(\sigma):=\sum_{\substack{\Gamma\in(\Pairs^2_{k,n})^0\\\Gamma=\{(P_1,\alpha_1),(P_2,\alpha_2)\}\\\text{$P_1$ is a union of parts of $\Pi$.}}}\frac{(-1)^{e_\Pi(\Gamma)}}{2}\Gamma.
  \end{align}
  This defines $\tWPQn$ on
  $\mathbf{S}^1_{k,n}$. We also define
  \begin{align}\label{eq:WPTilde23}
    \tWPQn(\sigma)=
    \begin{cases}
      W_{n}(\sigma)&\sigma\in\mathbf{S}^2_{k,n}\\
      0&\sigma\in\mathbf{S}^{\ge3}_{k,n}.
    \end{cases}
  \end{align}
  Extending by linearity, these collectively define a map
  $\tWPQn:\Q\mathbf{S}_{k,n}\to\Q\Pairs^2_{k,n},$ which clearly satisfies conditions \ref{i1}, \ref{i2}, and \ref{i3} above.

  Let $\mathcal{R}=\mathcal{R}(\sigma,v,A,B,C,D)\in\Q\mathbf{S}_{k,n}$ be a Kontsevich-Manin
  relation as in \eqref{eq:KMRelation}. We must show $\tWPQn({\mathcal{R}})\in\Q(\Pairs^2_{k,n})^{\ge1}$.
  There are several cases, which we treat separately -- Case VI is the
  hardest by far:
  \begin{enumerate}[label=\textbf{\Roman*.}]
  \item $\sigma$ has $\ge4$ vertices with valence $\ge4$.
  \item $\sigma$ has 3 vertices $v,v',v''$ with valence $\ge4$, and
    $\val(v)\ge5$.
  \item $\sigma$ has 3 vertices $v,v',v''$ with valence $\ge4$,
    and $\val(v)=4$.
  \item $\sigma$ has exactly 2 vertices $v,v'$ with valence $\ge4$, and
    $\val(v)>4$.
  \item $\sigma$ has exactly 2 vertices $v,v'$ with valence $\ge4$, and
    $\val(v)=4$.
  \item $v$ is the unique vertex of $\sigma$ with $\val(v)\ge4$.
  \end{enumerate}

  \medskip

  \noindent\textbf{Case I.} In this case, every term of ${\mathcal{R}}$ is in
  $\mathbf{S}^{\ge3}_{k,n}$, so $\tWPQn({\mathcal{R}})=0.$

  \medskip

  \noindent\textbf{Case II.} Again, every term of ${\mathcal{R}}$ is in
  $\mathbf{S}^{\ge3}_{k,n}$, so $\tWPQn({\mathcal{R}})=0.$
  
  \medskip
  
  \noindent\textbf{Case III.} 
  In this case, ${\mathcal{R}}$ has exactly two terms, with opposite signs. These
  two trees (which both have exactly two vertices with
  valence $\ge4$) differ only in the rearrangement of trivalent
  subtrees. Thus $\tWPQn$ does not distinguish
  between them, i.e. $\tWPQn({\mathcal{R}})=0.$
  

  \medskip

  \noindent\textbf{Case IV.} All but four terms of ${\mathcal{R}}$ are in
  $\mathbf{S}^{\ge3}_{k,n},$ so we may ignore them. If $v$ and $v'$
  are not adjacent, then these four terms are all in
  $(\mathbf{S}^{2}_{k,n})^{\ge1},$ hence
  $\tWPQn({\mathcal{R}})\in(\Pairs^2_{k,n})^{\ge1}.$ If $v$
  and $v'$ are adjacent, connected by an edge $e$, there are two
  subcases:
\begin{enumerate}
\item One of $A,B,C,D$ is $e$ (without loss of generality,
  $D=e$), or\label{item:IV1}
\item None of $A,B,C,D$ is $e$.\label{item:IV2}
\end{enumerate}
In subcase \ref{item:IV1}, let $\mathbf{T}$ be the set of edges or half-edges incident
to $v$ other than $A,B,C,D$. In the notation of Section \ref{sec:Setup}, the four remaining
terms of ${\mathcal{R}}$ are:
\begin{align*}
  (AB\mathbf{T}|Ce)+(AB|Ce\mathbf{T})-(AC\mathbf{T}|Be)-(AC|Be\mathbf{T}).
\end{align*}

Note that by definition
$\tWPQn(AB|Ce\mathbf{T})=\tWPQn(AC|Be\mathbf{T})$. The first and third terms are sent to
$(\Pairs^2_{k,n})^{\ge1}.$ Thus
$\tWPQn({\mathcal{R}})\in(\Pairs^2_{k,n})^{\ge1}.$

In subcase \ref{item:IV2}, let $\mathbf{T}$ be the set of edges or half-edges
incident to $v$ other than $A,B,C,D,e$. The four remaining terms of ${\mathcal{R}}$ are:
\begin{align*}
  (ABe\mathbf{T}|CD)+(AB|CDe\mathbf{T})-(ACe\mathbf{T}|BD)-(AC|BDe\mathbf{T}).
\end{align*}
Observe that
$$\tWPQn(ABe\mathbf{T}|CD)=\tWPQn(AB|CDe\mathbf{T})=\tWPQn(ACe\mathbf{T}|BD)=\tWPQn(AC|BDe\mathbf{T}).$$
Thus $\tWPQn({\mathcal{R}})=0.$

\medskip

\noindent\textbf{Case V.} ${\mathcal{R}}$ has two
terms with opposite sign, each of which is a tree with
exactly one vertex with valence $\ge4.$ They induce the same partition
$\Pi\vdash\{1,\ldots,n\}$ (as in \eqref{eq:WPTilde1}), hence they
cancel after applying $\tWPQn$.

\medskip

\noindent\textbf{Case VI.} We have $\val(v)=k+4$. We write $\mathbf{T}$ for the set
of edges and half-edges incident to $v$ other than
$A,B,C,D$. Note that
$\abs{\mathbf{T}}=k$.
  
Recall that
\begin{align*}
  \mathcal{R}=\sum_{\mathbf{U}_1\sqcup \mathbf{U}_2=\mathbf{T}}(AB\mathbf{U}_1|CD\mathbf{U}_2)-\sum_{\mathbf{U}_1\sqcup
  \mathbf{U}_2=\mathbf{T}}(AC\mathbf{U}_1|BD\mathbf{U}_2)\in\Q\mathbf{S}_{k,n}.
\end{align*}
Note that terms where $\mathbf{U}_1=\emptyset$ or
$\mathbf{U}=\emptyset$ are in $\mathbf{S}^1_{k,n}$, and terms where
$\mathbf{U}_1,\mathbf{U}_2\ne\emptyset$ are in $\mathbf{S}^2_{k,n}.$ We will apply





  


\medskip

We also introduce an abuse of notation that will now be convenient. By definition, $A,B,C,D,$ and the elements of $\mathbf{T}$ denote edges or half-edges of $\sigma$ incident to $v.$ We use the same symbols to refer to subsets of $\{1,\ldots,n\}$, where e.g. $A$ is the set of all $i\in\{1,\ldots,n\}$ such that the path from $v$ to the $i$th marked half-edge contains the edge $A$. (If $A$ is itself the $i$th marked half-edge, then we write $A=\{i\}$.)

Let $\Gamma=\{(P_1,\alpha_1),(P_2,\alpha_2)\}\in(\Pairs^2_{k,n})^0$. Note that by definition of $(\Pairs^2_{k,n})^0,$ we
have $P_1\cup P_2=\{1,\ldots,n\}.$ We need to show that the coefficient of $\Gamma$ in $\tWPQn({\mathcal{R}})$ is zero. We do this argument in cases again; $\Gamma$ must be of one of the following types:

\medskip

\noindent\textit{Type 0.} At least one of the sets $A,B,C,D\subseteq\{1,\ldots,n\}$, or a part of
$\mathbf{T}$, has nonempty intersection with both $P_1$ and $P_2$.

\medskip
  
\noindent\textit{Type 1.} Two of $A,B,C,D$ are subsets of $P_1$, and
the other two are subsets of $P_2$, and no part of $\mathbf{T}$ has nonempty intersection with both $P_1$ and $P_2$.

\smallskip

\textit{Type 1.1} $A,B\subseteq P_1$ and $C,D\subseteq P_2$,
and no part of $\mathbf{T}$ has nonempty intersection with both $P_1$ and $P_2$.

\smallskip
  
\textit{Type 1.2.} $A,C\subseteq P_1$ and $B,D\subseteq P_2$,
and no part of $\mathbf{T}$ has nonempty intersection with both $P_1$ and $P_2$.

\smallskip

\textit{Type 1.3.} $A,D\subseteq P_1$ and $B,C\subseteq P_2$,
and no part of $\mathbf{T}$ has nonempty intersection with both $P_1$ and $P_2$.

\medskip

\noindent\textit{Type 2.} Three of $A,B,C,D$ are subsets of $P_1$, and
the remaining one is a subset of $P_2,$ and no part of $\mathbf{T}$ has nonempty intersection with both $P_1$ and $P_2$.

\smallskip

\textit{Type 2.1} $A,B,C\subseteq P_1$ and $D\subseteq P_2$,
and no part of $\mathbf{T}$ has nonempty intersection with both $P_1$ and $P_2$.

\smallskip
  
\textit{Type 2.2.} $A,C,D\subseteq P_1$ and $B\subseteq P_2$,
and no part of $\mathbf{T}$ has nonempty intersection with both $P_1$ and $P_2$.

\smallskip

\textit{Type 2.3.} $A,B,D\subseteq P_1$ and $C\subseteq P_2$,
and no part of $\mathbf{T}$ has nonempty intersection with both $P_1$ and $P_2$.
  
\smallskip

\textit{Type 2.4.} $B,C,D\subseteq P_1$ and $A\subseteq P_2$,
and no part of $\mathbf{T}$ has nonempty intersection with both $P_1$ and $P_2$.

\medskip

\noindent\textit{Type 3.} $A,B,C,D\subseteq P_1$, and no part of
$\mathbf{T}$ has nonempty intersection with both $P_1$ and $P_2$.

\medskip

\noindent \textbf{Caution.} It is tempting to use the $S_4$-action that permutes $A,B,C,D$, but one must be very careful in doing so. The sets $A,B,C,D$ have been fixed, and may have different cardinalities; we may only invoke symmetry if our arguments do not refer to any specific properties of $A,B,C,D$. 

\medskip

\noindent\textbf{If $\Gamma$ is of type 0, the coefficient of $\Gamma$ in $\tWPQn({\mathcal{R}})$ is zero.} By \eqref{eq:WPTilde1} and
\eqref{eq:WPTilde23}, no terms of ${\mathcal{R}}$ contribute a term of type 0 to 
$\tWPQn({\mathcal{R}})$. 

\medskip
  
\noindent\textbf{If $\Gamma$ is of type 1.1, the coefficient of $\Gamma$ in $\tWPQn({\mathcal{R}})$ is zero.} Let
$\Gamma$ be of type 1.1, so we may write
  \begin{align*}
    \Gamma=\left\{\textstyle\left(A\cup B\cup\bigcup_{S\in\mathbf{T}_1}S,\alpha_1\right),\left(C\cup D\cup\bigcup_{S\in\mathbf{T}_2}S,\alpha_2
                          \right)\right\}
  \end{align*}
  for some partition $\mathbf{T}_1\sqcup\mathbf{T}_2=\mathbf{T},$ and
  for some $\alpha_1,\alpha_2$. Note that
  $\alpha_1+\alpha_2=k=\abs{\mathbf{T}}=\abs{\mathbf{T}_1}+\abs{\mathbf{T}_2}.$
  Let $$\ell=\alpha_1-\abs{\mathbf{T}_1}=\abs{\mathbf{T}_2}-\alpha_2.$$

  \noindent \textbf{Claim: If $\ell=0,$ the coefficient of $\Gamma$ in
    $\tWPQn({\mathcal{R}})$ is zero.}
  \begin{proof}[Proof of claim]
    First, note that in this case $\abs{\mathbf{T}_1}=\alpha_1\ge1,$
    i.e. $\mathbf{T}_1\ne\emptyset.$ Similarly
     $\mathbf{T}_2\ne\emptyset.$ By definition, every term of $\mathcal{R}$ is of the form $(AB\mathbf{U}_1|CD\mathbf{U}_2)$ or $-(AC\mathbf{U}_1|BD\mathbf{U}_2)$, for some partition $\mathbf{U}_1\sqcup\mathbf{U}_2=\mathbf{T}$. 
    In terms of the latter form, $\Gamma$ appears with coefficient zero by \eqref{eq:WPTilde1} and \eqref{eq:WPTilde23}.
    In a term $(AB\mathbf{U}_1|CD\mathbf{U}_2)$ with $\mathbf{U}_1,\mathbf{U}_2\ne\emptyset$ and $\mathbf{U}_1\ne\mathbf{T}_1,$ $\Gamma$ also appears with coefficient zero by \eqref{eq:WPTilde23}. Thus the only terms of ${\mathcal{R}}$ that contribute to the coefficient of
    $\Gamma$ in $\tWPQn({\mathcal{R}})$ are:
    \begin{align}\label{eq:3terms}
      (AB\mathbf{T}_1|CD\mathbf{T}_2),&&(AB|CD\mathbf{T}),&&\text{and}&&(AB\mathbf{T}|CD).
    \end{align}
    By definition,
    $$\tWPQn((AB\mathbf{T}_1|CD\mathbf{T}_2))=1\cdot\Gamma.$$   
    The boundary stratum $(AB|CD\mathbf{T})$ has a single vertex with valence $\ge4,$ with corresponding set partition
    $\Pi=\{A\cup B,C,D\}\cup\mathbf{T}\vdash\{1,\ldots,n\}$. Using the notation in the paragraph preceding \eqref{eq:WPTilde1} (with respect to our fixed $\Gamma\in(\Pairs^2_{k,n})^0$), we have $s_1=1+\abs{\mathbf{T_1}}=1+\alpha_1$, so $s_1-\alpha_1=1.$ Thus $s_2-\alpha_2=2$, so $e_\Pi(\Gamma)=s_1-\alpha_1=1.$ The coefficient of $\Gamma$ in $\tWPQn((AB|CD\mathbf{T}))$ is thus $-1/2.$
    

    
    The set partition associated to $(AB\mathbf{T}|CD)$ is
    $\Pi=\{A,B,C\cup D\}\cup\mathbf{T}$. In this case we have $s_1=2+\abs{\mathbf{T_1}}=2+\alpha_1$, so $s_1-\alpha_1=2.$ Thus $s_1-\alpha_1=1$, so $e_\Pi(\Gamma)=s_2-\alpha_2=1.$ The coefficient of $\Gamma$ in $\tWPQn((AB\mathbf{T}|CD))$ is thus $-1/2.$

    In total, the coefficient is $1-1/2-1/2=0,$ proving the claim.
  \end{proof}

    \noindent \textbf{Claim: If $\ell\ne0,$ the coefficient of $\Gamma$ in
    $\tWPQn({\mathcal{R}})$ is zero.}
  \begin{proof}[Proof of claim]
    Note that $\alpha_1,\alpha_2\ge1$ imply $-\abs{\mathbf{T}_1}+1\le\ell\le k-\abs{\mathbf{T}_2}-1.$ 
    By the same argument as the one preceding \eqref{eq:3terms}, the only terms of ${\mathcal{R}}$ that contribute to the coefficient of
    $\Gamma$ in $\tWPQn({\mathcal{R}})$ are
    \begin{align*}
      (AB\mathbf{T}|CD)&&\text{and}&&(AB|CD\mathbf{T}).
    \end{align*}
    (Note that $(AB\mathbf{T}_1|CD\mathbf{T}_2)$ does not contribute because the values of $\alpha_1,\alpha_2$ in $\tWPQn((AB\mathbf{T}_1|CD\mathbf{T}_2))$ do not match those in $\Gamma$.

    As above, $(AB|CD\mathbf{T})$ induces set partition
    $\Pi_1=\{A\cup B,C,D\}\cup\mathbf{T}.$ Using the notation in the paragraph preceding \eqref{eq:WPTilde1} (with respect to our fixed $\Gamma\in(\Pairs^2_{k,n})^0$), we have $s_1=1+\abs{\mathbf{T_1}}=1+\alpha_1-\ell$, so $s_1-\alpha_1=1-\ell$ (and $s_2-\alpha_2=2+\ell$). Thus $e_{\Pi_1}(\Gamma)=\min\{1-\ell,2+\ell\}$.
    
    On the other hand, $(AB\mathbf{T}|CD)$ induces set partition
    $\Pi_2=\{A,B,C\cup D\}\cup\mathbf{T}.$ In this case $s_1=2+\abs{\mathbf{T_1}}=2+\alpha_1-\ell$, so $s_1-\alpha_1=2-\ell\le 1$ (and $s_2-\alpha_2=1+\ell\ge2$). Thus $e_{\Pi_2}(\Gamma)=\min\{1+\ell,2-\ell\}$
    
    If $\ell>0,$ the coefficient of $\Gamma$ in $\tWPQn(\mathcal{R})$ is $(-1)^{1-\ell}/2+(-1)^{2-\ell}/2=0.$ If $\ell<0,$ the coefficient of $\Gamma$ in $\tWPQn(\mathcal{R})$ is $(-1)^{2+\ell}/2+(-1)^{1+\ell}/2=0.$ This proves the claim.
  \end{proof}
  We conclude that the coefficient of $\Gamma$ in
  $\tWPQn({\mathcal{R}})$ is zero.

  \medskip

  \noindent\textbf{If $\Gamma$ is of type 1.2, the coefficient of $\Gamma$ in $\tWPQn({\mathcal{R}})$ is zero.} The argument for the type 1.1 case did not refer to any properties of the sets $A,B,C,D$, so this case follows by symmetry.
  

  \medskip

  \noindent\textbf{If $\Gamma$ is of type 1.3, the coefficient of $\Gamma$ in $\tWPQn({\mathcal{R}})$ is zero.}  No terms of ${\mathcal{R}}$ contribute a term of type 1.3
  to $\tWPQn({\mathcal{R}})$.

  \bigskip

  \noindent\textbf{If $\Gamma$ is of type 2.1, the coefficient of $\Gamma$ in $\tWPQn({\mathcal{R}})$ is zero.} The only terms of $\mathcal{R}$ that contribute to the coefficient of $\Gamma$ in $\tWPQn(\mathcal{R})$ are:
  \begin{align*}
      (AB|CD\mathbf{T})&&\text{and}&&-(AC|BD\mathbf{T}).
  \end{align*}
  The corresponding set partitions are $\Pi_1=\{A\cup B,C,D\}\cup\mathbf{T}$ and $\Pi_1=\{A\cup C,B,D\}\cup\mathbf{T}$. By definition, $e_{\Pi_1}(\Gamma)=e_{\Pi_2}(\Gamma)$, so the coefficients of $\Gamma$ in $\tWPQn((AB|CD\mathbf{T}))$ and $\tWPQn(-(AC|BD\mathbf{T}))$ cancel.
  

  \medskip

  \noindent\textbf{If $\Gamma$ is of type 2.2, 2.3, or 2.4, the coefficient of $\Gamma$ in $\tWPQn({\mathcal{R}})$ is zero.} The argument for type 2.1 did not refer to any properties of the sets $A,B,C,D$, so these cases follow by symmetry.



  \medskip

  \noindent\textbf{If $\Gamma$ is of type 3, the coefficient of $\Gamma$ in $\tWPQn({\mathcal{R}})$ is zero.} The only terms of $\mathcal{R}$ that contribute to the coefficient of $\Gamma$ in $\tWPQn(\mathcal{R})$ are:
  \begin{align*}
      (AB|CD\mathbf{T}),&&(AB\mathbf{T}|CD),&&-(AC|BD\mathbf{T}),&&\text{and}&&-(AC\mathbf{T}|BD).
  \end{align*}
  The corresponding set partitions are $\Pi_1=\{A\cup B,C,D\}\cup\mathbf{T}$, $\Pi_2=\{A,B,C\cup D\}\cup\mathbf{T}$, $\Pi_3=\{A\cup C,B,D\}\cup\mathbf{T}$, and $\Pi_1=\{A,C,B\cup D\}\cup\mathbf{T}$. By definition, $e_{\Pi_1}(\Gamma)=e_{\Pi_2}(\Gamma)=e_{\Pi_3}(\Gamma)=e_{\Pi_4}(\Gamma)$, so the coefficients of $\Gamma$ in $\tWPQn((AB|CD\mathbf{T})),$ $\tWPQn((AB\mathbf{T}|CD)),$ $\tWPQn(-(AC|BD\mathbf{T})),$ and $\tWPQn(-(AC\mathbf{T}|BD))$ cancel.

  Altogether, we conclude that $\tWPQn({\mathcal{R}})=0.$ This
  completes Case VI and the proof of the base case.
\end{proof}


Next we prove the inductive step.

  
\begin{prop}\label{prop:PairsBIsom}
  Suppose
  $W_{n,b}:\Q(\mathbf{S}^2_{k,n})^{b}\to\Q(\Pairs^2_{k,n})^{b}$
  descends $S_n$-equivariantly to $(Q^2_{k,n})^b$ for a fixed $b$ and $n$. Then
  $W_{n,b+1}:\Q(\mathbf{S}^2_{k,n+1})^{b+1}\to\Q(\Pairs^2_{k,n+1})^{b+1}$
  descends $S_{n+1}$-equivariantly to $(Q^2_{k,n+1})^{b+1}$. In particular, if Lemma \ref{lem:QToP} holds
  for $(b,n)$ for all $n$ (and fixed $b$), then it holds for $(b+1,n)$
  for all $n$.
\end{prop}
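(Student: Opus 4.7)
The plan is to descend $W_{n+1, b+1}$ by combining the inductive hypothesis on $(Q^2_{k,n})^b$ with the forgetful morphisms $\pi_i: \Mbar_{0,n+1} \to \Mbar_{0,n}$ (forgetting the $i$-th marked point). The pushforwards $(\pi_i)_*$ preserve the Ramadas filtration by the result of \cite{Ramadas2015} recalled just after Definition \ref{Def:RamadasFiltration}, so they induce maps on the graded pieces $Q^r_{k,\bullet}$.

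First I would verify that $(\pi_i)_*$ also respects the inner filtration of Definition \ref{Def:Filtrations}, namely that it sends $(Q^2_{k,n+1})^{\ge b+1}$ into $(Q^2_{k,n})^{\ge b}$ and thus induces an $S_n$-equivariant map $(Q^2_{k,n+1})^{b+1} \to (Q^2_{k,n})^{b}$. This reduces to a boundary-stratum analysis: if $\sigma \in (\mathbf{S}^2_{k,n+1})^{\ge b+1}$ has its $i$-th marked half-edge attached to a trivalent vertex of the middle component $\sigma'$, then $\pi_i$ is an isomorphism on the stratum and $\pi_i(\sigma) \in (\mathbf{S}^2_{k,n})^{\ge b}$ (smoothing loses one vertex and one half-edge from $\sigma'$); if the $i$-th half-edge lies elsewhere in $\sigma_1$ or $\sigma_2$, not at $v_1$ or $v_2$, then $\pi_i(\sigma)$ retains the same $\sigma'$, landing in $(\mathbf{S}^2_{k,n})^{\ge b+1}$; if the $i$-th half-edge is attached to $v_1$ or $v_2$, the pushforward vanishes by a dimension count. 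On the combinatorial side there is a matching $S_n$-equivariant ``forget $i$'' map $\bar\pi_i: \Pairs^2_{k,n+1} \to \Pairs^2_{k,n}$ removing $i$ from $P_1$ or $P_2$ when applicable. Unwinding the definitions of $W_n$ and $W_{n+1}$, the commutation $W_{n,b} \circ (\pi_i)_* \equiv \bar\pi_i \circ W_{n+1,b+1}$ holds modulo $(\Pairs^2_{k,n})^{\ge b+1}$.

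With this in hand, the descent is nearly formal. Suppose $v \in \Q(\mathbf{S}^2_{k,n+1})^{b+1}$ lies in the kernel of $\rho_{n+1,b+1}$; then for each $i$, $(\pi_i)_*(v)$ lies in the kernel of $\rho_{n,b}$, and the inductive hypothesis gives $\bar\pi_i(W_{n+1,b+1}(v)) = W_{n,b}((\pi_i)_*(v)) = 0$ for all $i$. To conclude $W_{n+1,b+1}(v) = 0$, I would verify that the joint map $\prod_i \bar\pi_i : \Q(\Pairs^2_{k,n+1})^{b+1} \to \prod_i \Q(\Pairs^2_{k,n})^{b}$ is injective; this is a combinatorial check amounting to the claim that a pair $\Gamma = \{(P_1,\alpha_1),(P_2,\alpha_2)\} \in (\Pairs^2_{k,n+1})^{b+1}$ is recovered from the collection of its restrictions under removing each index.

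The main obstacle I anticipate is establishing the commutation $W_{n,b} \circ (\pi_i)_* \equiv \bar\pi_i \circ W_{n+1,b+1}$ cleanly, since $(\pi_i)_*$ has qualitatively different behaviour on different types of strata. In particular the case where the $i$-th marked half-edge is attached to $v_1$ or $v_2$ (so the pushforward vanishes) must be reconciled with $\bar\pi_i \circ W_{n+1,b+1}$ sending such $\sigma$ to a pair in which $i$ lies in $P_1$ or $P_2$; organising these contributions symmetrically across varying $i$ is where the argument has to be careful. This should still be a much lighter case analysis than the Type 0 through Type 3 breakdown of Case VI in the base case, since the induction hypothesis now absorbs the hard combinatorics.
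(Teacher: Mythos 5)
Your proposal is correct and follows essentially the same route as the paper: the paper works only with the single forgetful map $\pi_{n+1}$ together with $S_{n+1}$-equivariance of everything in its commutative diagram, which replaces your injectivity of $\prod_i\bar\pi_i$ with the equivalent observation that $\bigcap_{g\in S_{n+1}}g\cdot\ker(\pi_{n+1,b+1})$ is spanned by pairs with $P_1\cup P_2=\{1,\ldots,n+1\}$ and hence vanishes once $b+1>0$. Incidentally, the case you flag as the main obstacle is actually the easy one: when the $i$-th marked half-edge lies on $v_1$ or $v_2$, the pushforward $(\pi_i)_*$ vanishes by a dimension count, while $\bar\pi_i\circ W_{n+1,b+1}(\sigma)$ vanishes modulo $(\Pairs^2_{k,n})^{\ge b+1}$ because $i\in P_1\cup P_2$, so the commutation holds with both sides equal to zero and no coordination across varying $i$ is needed.
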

\begin{proof}
  Suppose
  $W_{n,b}:\Q(\mathbf{S}^2_{k,n})^{b}\to\Q(\Pairs^2_{k,n})^{b}$
  descends equivariantly to $(Q^2_{k,n})^b$. Let $\pi_{n+1}$ be the forgetful map that
  forgets the last point. Note that
  $(\pi_{n+1})_*((Q^{2}_{k,n+1})^{\ge b+1})\subseteq
  (Q^{2}_{k,n})^{\ge b}.$ (This holds because for any tree in $(\mathbf{S}^{2}_{k,n+1})^{\ge b+1}$, the corresponding boundary stratum is either contracted
  in dimension by $\pi_{n+1}$, or maps isomorphically to a boundary
  stratum in $(\mathbf{S}^{2}_{k,n})^{\ge b}$.) Thus there is an
  induced map on quotients
  \begin{align*}
    (\pi_{n+1})_*:(Q^{2}_{k,n+1})^{b+1}\to (Q^{2}_{k,n})^{b}.
  \end{align*}
  By the inductive hypothesis, there is a map
  \begin{align*}
    \bar{W_{n,b}}:(Q^{2}_{k,n})^{b}\to(\Pairs^2_{k,n})^{b},
  \end{align*}
  such that the lower-right triangle commutes in the following
  diagram:
\begin{center}
\begin{tikzcd}
  &\Q(\mathbf{S}^{2}_{k,n+1})^{b+1}\arrow[d,"\rho_{n+1,b+1}",swap]\arrow[drr,"W_{n+1,b+1}"]\arrow[dddr,"\pi_{n+1,b+1}"]&\\
    &(Q^{2}_{k,n+1})^{b+1}\arrow[dddr,"(\pi_{n+1})_*",swap]\arrow[rr,dashed,bend
    right=25]&&\Q(\Pairs^2_{k,n+1})^{b+1}\arrow[dddr,"\pi_{n+1,b+1}"]\\
    &\\
  &&\Q(\mathbf{S}^{2}_{k,n})^{b}\arrow[d,"\rho_{n,b}"]\arrow[drr,"W_{n,b}"]\\
  &&(Q^{2}_{k,n})^{b}\arrow[rr,"\bar{W_{n,b}}",swap]&&\Q(\Pairs^2_{k,n})^{b}
\end{tikzcd}
\end{center}
Here the map
$\pi_{n+1,b+1}:\Q(\mathbf{S}^{2}_{k,n+1})^{b+1}\to\Q(\mathbf{S}^{2}_{k,n})^{b}$
sends a tree $\sigma$ to the tree obtained by contracting the $(n+1)$-st marked half-edge and stabilizing (see e.g. \cite{KockVainsencher2006}) if the $(n+1)$-st marked half-edge is
on $\sigma'$ (using notation from Definition \ref{Def:WPDef}),
and to zero otherwise. The map
$\pi_{n+1,b+1}:\Q(\Pairs^{2}_{k,n+1})^{b+1}\to\Q(\Pairs^{2}_{k,n})^{b}$
sends a pair $\{(P_1,\alpha_1),(P_2,\alpha_2)\}$ to itself if
$n+1\in(P_1\cup P_2)^C$, and to zero otherwise. The entire diagram is
easily checked to commute, essentially using the fact that if
$\sigma\in(\mathbf{S}^{2}_{k,n+1})^{b+1}$ is such that
$n+1\in\sigma_1,$ then
$(\pi_{n+1})_*(\rho_{n+1,b+1}(\sigma))\in(Q^2_{k,n})^{\ge b+1}.$ We are
trying to prove the existence of the dashed arrow.





Let $r\in\Q(\mathbf{S}^{2}_{k,n+1})^{b+1}$ be such that
$\rho_{n+1,b+1}(r)=0.$ Then
certainly
$$\bar{W_{n,b}}((\pi_{n+1})_*(\rho_{n+1,b+1}(r)))=0,$$ so by
commutativity, $W_{n+1,b+1}(r)$ lies
in
$$\ker(\pi_{n+1,b+1})=\Span\{\{(P_1,\alpha_1),(P_2,\alpha_2)\}\in(\Pairs^2_{k,n+1})^{b+1}:n+1\in
P_1\cup P_2\}.$$ Then by $S_{n+1}$-symmetry, $W_{n+1,b+1}(r)$ actually
lies in the smaller
subspace
$$\Span\{\{(P_1,\alpha_1),(P_2,\alpha_2)\}\in(\Pairs^2_{k,n+1})^{b+1}:P_1\cup
P_2=\{1,\ldots,n+1\}\}.$$ Since $b+1>0,$ this subspace is trivial, so
$W_{n+1,b+1}(r)=0.$ Thus $W_{n+1,b+1}$ descends to a
(clearly $S_{n+1}$-equivariant) map
$\bar{W_{n+1,b+1}}:(Q^{2}_{k,n+1})^{b+1}\to\Q(\Pairs^2_{k,n+1})^{b+1}.$ This completes the proof, and we conclude Lemma \ref{lem:QToP}.
\end{proof}
We have just shown that $W_{n,b}$ descends to $(Q^2_{k,n})^b$. Next we must prove that $\rho_{n,b}$ descends to $(\Pairs^2_{k,n})^b$. We will use the following well-known fact.
\begin{prop}\label{prop:RearrangeFullyDegenerateSubtree}
  Let $\sigma\in\mathbf{S}_{k,n},$ and suppose $\sigma'$ can be
  obtained from $\sigma$ by rearranging a trivalent
  subtree. (That is, suppose there exist trivalent subtrees of
  $\sigma$ and $\sigma'$ whose complements are isomorphic as marked
  trees, and such that the two subtrees have the same marking set.)
  Then $\sigma$ and $\sigma'$ have the same class in
  $H_{2k}(\Mbar_{0,n}).$
\end{prop}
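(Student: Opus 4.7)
The plan is to reduce to the case in which $\sigma$ and $\sigma'$ differ by a single Whitehead move inside the trivalent subtree, and then identify that move with the Kontsevich-Manin relation \eqref{eq:KMRelation} applied at a $4$-valent vertex.

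First, I invoke the classical fact that any two trivalent trees on the same labeled leaf set are connected by a sequence of Whitehead moves (contracting an internal edge and re-expanding in one of the two other possible ways). The assumed isomorphism of the complements of the subtrees in $\sigma$ and $\sigma'$ puts their external edges in canonical bijection, so together with the common marked half-edges inside the subtree the two subtrees may be viewed as trivalent trees on the same leaf set. It therefore suffices to treat the case where $\sigma$ and $\sigma'$ differ by a single Whitehead move performed on some internal edge $e$ of the subtree.

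In that case, let $v_1, v_2$ be the endpoints of $e$; both have valence exactly $3$ in $\sigma$ by the trivalence hypothesis. Let $A, B$ denote the edges or half-edges at $v_1$ other than $e$, and $C, D$ those at $v_2$ other than $e$. Contracting $e$ produces a tree $\tau \in \mathbf{S}_{k+1, n}$ with a new $4$-valent vertex $v$ whose incident edges and half-edges are exactly $A, B, C, D$, so in the notation of \eqref{eq:KMRelation} one has $\mathbf{T} = \emptyset$ and the relation collapses to
\begin{align*}
\mathcal{R}(\tau, v, A, B, C, D) = (AB|CD) - (AC|BD) = 0 \quad \text{in } H_{2k}(\Mbar_{0,n}, \Q).
\end{align*}
Here $(AB|CD) = \sigma$, and $\sigma'$ equals either $(AC|BD)$ or $(AD|BC)$ depending on which of the two flips of $e$ is performed. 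Applying the same relation at $\tau$ with the roles of $B$ and $D$ interchanged gives $(AC|BD) = (AD|BC)$ as well, so in either case $\sigma = \sigma'$ in $H_{2k}(\Mbar_{0,n}, \Q)$. Iterating over the Whitehead moves produced by the first step finishes the proof.

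The main obstacle is essentially bookkeeping: one must verify that an internal edge of a trivalent subtree really has both endpoints of valence exactly $3$ in $\sigma$, so that contraction produces a $4$-valent vertex and the Kontsevich-Manin relation takes the clean two-term form above. The Kontsevich-Manin relations themselves and the connectivity of trivalent trees under Whitehead moves are invoked as black boxes.
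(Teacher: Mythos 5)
Your proof is correct, but it takes a genuinely different route from the paper's. The paper's argument is a one-line geometric observation: after contracting the trivalent subtree to a single vertex $v_0$, the boundary stratum $\sigma$ is the pushforward of $[\mathrm{pt}]\times 1\times\cdots\times 1$ from the product $\Mbar_{0,\val(v_0)}\times\prod\Mbar_{0,n_i}$, and $\sigma'$ is the pushforward of the same class for a different choice of $\mathrm{pt}\in\Mbar_{0,\val(v_0)}$; since all $0$-cycles of degree $1$ on the connected variety $\Mbar_{0,\val(v_0)}$ are homologous, $\sigma=\sigma'$ in $H_{2k}$. Your argument instead stays entirely inside the combinatorial presentation of $H_{2k}(\Mbar_{0,n},\Q)$ by generators $\mathbf{S}_{k,n}$ and Kontsevich--Manin relations: you reduce to a single Whitehead move via connectivity of trivalent trees, observe that both endpoints of an internal edge of the subtree are $3$-valent, so contracting that edge yields a $4$-valent vertex with $\mathbf{T}=\emptyset$, and the relation \eqref{eq:KMRelation} collapses to $(AB|CD)=(AC|BD)$ (and likewise $(AB|CD)=(AD|BC)$ by permuting the arguments). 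The paper's route is shorter and exploits the geometry of strata as products; your route is longer but has the virtue of being purely combinatorial and of exhibiting an explicit chain of Kontsevich--Manin relations, which dovetails with the framework the rest of Section~\ref{sec:Proof} works in. Both are sound; the one small point worth stating explicitly in your write-up is that the intermediate trees in the Whitehead sequence remain in $\mathbf{S}_{k,n}$ (they stay trivalent on the subtree and leave the complement untouched, so stability and dimension are preserved), which you use implicitly when iterating.
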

\begin{proof}[Sketch of proof]
  Contracting the relevant trivalent subtrees, $\sigma$ and $\sigma'$
  can both be written as the pushforward from the same product
  $\Mbar_{0,n_0}\times\Mbar_{0,n_1}\times\cdots\times\Mbar_{0,n_m},$
  of the class $[\mathrm{pt}]\times1\times\cdots\times1,$ for
  different choices of $\mathrm{pt}\in\Mbar_{0,n_0},$ the factor
  corresponding to the contracted subtrees.
\end{proof}

\begin{lem}\label{lem:PToQ}
  Fix nonnegative integers $n\ge4$, $k\le n-3,$ and $b\le n-k-4.$ The
  map $\rho_{n,b}:\Q(\mathbf{S}^2_{k,n})^b\to(Q^2_{k,n})^b$ descends
  equivariantly to a map
  $\bar{\rho_{n,b}}:\Q(\Pairs^2_{k,n})^b\to(Q^2_{k,n})^b$.
\end{lem}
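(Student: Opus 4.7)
My plan is to show that any two trees $\sigma_1,\sigma_2\in(\mathbf{S}^2_{k,n})^b$ with the same $W_n$-image $\{(P_1,\alpha_1),(P_2,\alpha_2)\}$ have the same class in $(Q^2_{k,n})^b$; once established, this lets $\rho_{n,b}$ factor equivariantly through $W_{n,b}$. Both trees share the structure of two central vertices $v_1,v_2$ of valences $\alpha_1+3,\alpha_2+3$, joined by a trivalent spine carrying $P_3:=\{1,\ldots,n\}\setminus(P_1\cup P_2)$, and trivalent subtrees off each $v_i$ with marking sets partitioning $P_i$. By Proposition \ref{prop:RearrangeFullyDegenerateSubtree}, arbitrary rearrangements of trivalent subtrees preserve the class in $H_{2k}(\Mbar_{0,n},\Q)$ and hence in $(Q^2_{k,n})^b$, so I may assume $\sigma_1$ and $\sigma_2$ differ only in the partitions of $P_1$ and $P_2$ into the marking sets of the $\alpha_i+2$ branches of $v_i$.

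Since any two partitions of $P_1$ into $\alpha_1+2$ non-empty parts are connected by a sequence of \emph{elementary transpositions} — moves that transfer a single element $i$ from a part $X_s$ of size at least $2$ to another part $X_t$ — it suffices to show every such transposition preserves the class. I first rearrange the trivalent structure of the $X_s$-branch (via Proposition \ref{prop:RearrangeFullyDegenerateSubtree}) so that the vertex $w$ adjacent to $v_1$ inside that branch has $\{i\}$ as one incidence and the subtree carrying $X_s\setminus\{i\}$ as the other. Then I let $\sigma^+\in\mathbf{S}^2_{k+1,n}$ be obtained from $\sigma$ by contracting the edge $v_1w$: the merged vertex $v$ has valence $\alpha_1+4$ with branches $\{i\}$, $X_s\setminus\{i\}$, $X_l$ (for $l\neq s$), and the spine. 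I apply two Kontsevich-Manin relations at $v$: $\mathcal{R}_1:=\mathcal{R}(\sigma^+,v,\{i\},X_t,\text{spine},X_s\setminus\{i\})$ and $\mathcal{R}_2:=\mathcal{R}(\sigma^+,v,\{i\},X_s\setminus\{i\},\text{spine},X_t)$, in each case with $\mathbf{T}=\{X_l:l\neq s,t\}$.

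A term-by-term analysis of each relation — shorter but similar in spirit to Case VI of the proof of Proposition \ref{prop:Pairs0Isom} — shows two types of vanishing terms. (i) Terms in which the split $v=v'\cup v''$ has both sides of valence at least $4$ lie in $\mathbf{S}^{\geq 3}_{k,n}$, hence vanish in $Q^2_{k,n}$. (ii) Terms in which the spine-incidence is on the opposite side of the split from most of $\mathbf{T}$ produce a tree whose new spine absorbs extra marked half-edges (from $\{i\}$, from $X_t$, or from $X_s\setminus\{i\}$, each nonempty by construction), placing that tree in $(\mathbf{S}^2_{k,n})^{\geq b+1}$ and hence vanishing in $(Q^2_{k,n})^b$. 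The two surviving terms of $\mathcal{R}_1$ realize the partitions $\Pi'$ (the desired target, with $i$ moved from $X_s$ to $X_t$) and $\Pi''':=\{\{i\},\,X_t\cup(X_s\setminus\{i\}),\,X_l:l\neq s,t\}$, yielding $[\sigma_{\Pi'}]=[\sigma_{\Pi'''}]$; the two surviving terms of $\mathcal{R}_2$ realize the original $\Pi$ and $\Pi'''$, yielding $[\sigma_\Pi]=[\sigma_{\Pi'''}]$. Combining gives $[\sigma_\Pi]=[\sigma_{\Pi'}]$, the desired transposition equivalence. An analogous argument at $v_2$ handles transpositions within the $P_2$-partition.

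The principal obstacle is the careful bookkeeping of the case analysis for $\mathcal{R}_1$ and $\mathcal{R}_2$ — correctly identifying the partition realized by each surviving term, and verifying that the ``higher-filtration'' terms genuinely place extra marked half-edges on the new spine. This hinges on the assumption $|X_s|\geq 2$ (so that $X_s\setminus\{i\}$ is nonempty) together with $|X_t|\geq 1$, and it parallels, though is substantially less intricate than, the corresponding analysis in Proposition \ref{prop:Pairs0Isom}. A minor combinatorial check — that elementary transpositions generate the equivalence of all partitions of $P_1$ into $\alpha_1+2$ non-empty parts — is standard.
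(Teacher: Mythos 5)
Your proof is correct and essentially matches the paper's: both reduce to the set-partition data at $v_1$ and $v_2$ via Proposition \ref{prop:RearrangeFullyDegenerateSubtree}, and both derive the key identity by applying a Kontsevich--Manin relation to the tree obtained by contracting an edge at $v_1$ and then discarding terms that land in $\mathbf{S}^{\ge 3}_{k,n}$ or in $(\mathbf{S}^2_{k,n})^{\ge b+1}$. In fact your $\mathcal{R}_2$ coincides, up to the Klein-four symmetry of the quadruple $(A,B,C,D)$ in \eqref{eq:KMRelation}, with the paper's single relation \eqref{eq:alg}; the paper uses that one ``merge'' move together with a chosen standard representative $\sigma_0$, whereas you pair $\mathcal{R}_2$ with $\mathcal{R}_1$ to produce a symmetric elementary transposition of the partition and then appeal to transitivity of such transpositions --- a difference of bookkeeping rather than of method (and your formulation also subsumes the paper's separate treatment of the boundary case $b=n-k-4$, where the partition is forced).
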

\begin{proof}
  We must check that $\rho_{n,b}(\ker(W_{n,b}))=0.$ Note that as an induced map on free vector spaces, $W_{n,b}$ has kernel spanned by $$\{\sigma-\sigma':\sigma,\sigma'\in\mathbf({S}^2_{k,n})^b,W_{n,b}(\sigma)=W_{n,b}(\sigma')\}.$$ Thus it suffices to show that if $\sigma,\sigma'\in\mathbf({S}^2_{k,n})^b$ and $W_{n,b}(\sigma)=W_{n,b}(\sigma')$, then $\rho_{n,b}(\sigma)=\rho_{n,b}(\sigma')$.
  We consider two cases: $b=n-k-4$ and $b<n-k-4$. 
  
  First, suppose $b=n-k-4.$ 
  Then for any $\sigma\in(\mathbf{S}^2_{k,n})^{\ge b}$, the two vertices
  $v_1$ and $v_2$ are leaves, and $W_{n,b}(\sigma)$ determines
  $\sigma$ up to rearrangement of the trivalent subtree
  obtained by deleting $v_1$ and $v_2.$ Thus $\rho_{n,b}$ descends to $(\Pairs^2_{k,n})^b$ by
  Proposition \ref{prop:RearrangeFullyDegenerateSubtree}.
  Next, suppose $b<n-k-4.$ Let $\tau\in(\mathbf{S}^2_{k,n})^{b}.$ We will next define an algorithm (\eqref{eq:alg} below) for producing trees $\tau'\in(\mathbf{S}^2_{k,n})^{b}$ such that $\rho_{n,b}(\tau')=\rho_{n,b}(\tau)$. Note that Proposition \ref{prop:RearrangeFullyDegenerateSubtree} is another such algorithm. We will then prove that if $\sigma,\sigma'\in\mathbf({S}^2_{k,n})^b$ and $W_{n,b}(\sigma)=W_{n,b}(\sigma')$, then $\sigma'$ can be obtained from $\sigma$ via these two algorithms.

  Let $\tau\in(\mathbf{S}^2_{k,n})^{\ge b}$. Since $b<n-k-4$, at least one of  $v_1$ and $v_2$ is not a leaf. Suppose without loss of
  generality that $v_1$ is not a leaf. Let $e_1$ be the edge incident
  to $v_1$ that is on the path from $v_1$ to $v_2.$ Fix an edge
  $e\ne e_1$, and write $e=\{v_1,v'\}$. Let $\bar{\tau}$ be the tree obtained by contracting $e$ to a vertex $v_1=v'$. We consider
  Kontsevich-Manin relations $\mathcal{R}=\mathcal{R}(\bar{\tau},v_1=v',A,B,C,e_1),$ where $A,B\ne e$ are incident to $v'$ in $\tau$, and $C\ne e$ is incident to $v_1$ in $\tau$.
  All but four terms of $\mathcal{R}$ are in
  $\mathbf{S}^{\ge3}_{k,n}.$ The remaining terms are:
  \begin{align*}
    (AB\mathbf{T}|Ce_1)+(AB|Ce_1\mathbf{T})-(AC\mathbf{T}|Be_1)+(AC|Be_1\mathbf{T})
  \end{align*}
  where $\mathbf{T}$ is the set of edges and half-edges incident to
  $v_1=v'$ other than $A,B,C,e_1$. Note that the first and third terms are in
  $(\mathbf{S}^{2}_{k,n})^{\ge b+1},$ hence are zero in
  $(Q^2_{k,n})^b.$ We are left with the relation
  \begin{align}\label{eq:alg}
    (AB|Ce_1\mathbf{T})=(AC|Be_1\mathbf{T}).
  \end{align}
  The left side is $\tau$, and the right side is another element of $(\mathbf{S}^2_{k,n})^b$.
  

  Fix $P=\{(P_1,\alpha_1),(P_2,\alpha_2)\}\in(\mathbf{P}^2_{k,n})^b.$ We must now show that if $\sigma,\sigma'\in(W_{n,b})^{-1}(P),$ then $\sigma'$ can be obtained from $\sigma$ via the two algorithms \eqref{eq:alg} and Proposition \ref{prop:RearrangeFullyDegenerateSubtree}.
  We will instead show that any $\sigma\in(W_{n,b})^{-1}(P)$ can
  be manipulated via \eqref{eq:alg} into a standard form. That is, we will find $\sigma_0\in(W_{n,b})^{-1}(P)$ such that for any $\sigma\in(W_{n,b})^{-1}(P)$, $\sigma_0$ can be obtained from $\sigma$ via the two algorithms. Let $\sigma_0$ be as follows: On $v_1$, the first $\alpha_1+1$
  elements of $P_1$ (in increasing order) are half-edges incident to
  $v_1,$ and the remaining elements of $P_1$ form a trivalent
  subtree incident to $v_1$. Similarly for $v_2,$ and the remaining
  points $(P_1\cup P_2)^C$ of course form a trivalent subtree
  connecting $v_1$ and $v_2$. This defines $\sigma_0$ up to rearrangement of trivalent subtrees; this is sufficient in light of Proposition
  \ref{prop:RearrangeFullyDegenerateSubtree}.

  Fix $\sigma\in(W_{n,b})^{-1}(P)$.  If the first $\alpha_1+1$
  marked half-edges of $P_1$ are incident to $v$, then $v_1$ is already in the desired form. If not,
  let $e,\ne e_1$ be incident to $v_1$ such that subtree corresponding
  to $e=\{v_1,v'\}$ contains (at least) one of the first $\alpha_1+1$ marked
  half-edges. Using Proposition
  \ref{prop:RearrangeFullyDegenerateSubtree}, rearrange the subtree
  so that this half-edge is incident to $v',$ then apply \eqref{eq:alg}, where
  $B$ is the chosen half-edge, and $C$ is any edge or half-edge
  incident to $v_1$ that is not equal to $e$, nor to any of the first
  $\alpha_1+1$ marked half-edges. (Such a flag must exist, since
  there are $\alpha_1+1$ edges and half-edges other than $e$ and
  $e_1$, and we have assumed that one of the first $\alpha_1+1$ marked
  half-edges is on the subtree corresponding to $e$.) In the resulting
  tree, the number of the first $\alpha_1+1$ marked
  half-edges incident to $v_1$ has increased by 1. We repeat until this number is $\alpha_1+1$, then apply the same argument to $v_2$. The result is $\sigma_0$; this completes the proof.
\end{proof}

\section{A conjectural dimension formula}\label{sec:conj}
We end with a conjectural formula for the dimension of $Q^r_{k,n}.$
\begin{conj}\label{conj:DimConj}
The dimension of $Q^r_{k,n}$ is $$\sum_{\substack{\alpha_1,\ldots,\alpha_r\ge1\\\alpha_1+\cdots+\alpha_r=k}}\sum_{\substack{p_1,\ldots,p_r,b\ge0\\p_1+\cdots+p_r+b=n+r-2\\p_i\ge\alpha_i+2}}\frac{1}{r!}\binom{n+r-2}{p_1,\ldots,p_r,b}.$$
\end{conj}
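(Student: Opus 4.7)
The plan is to attempt a direct generalization of the strategy used to prove Theorem \ref{thm:MainThmStrong}. The first step is to observe that the conjectural formula equals $\abs{\Pairs^r_{k, n+r-2}}$, where $\Pairs^r_{k, N}$ is defined analogously to Definition \ref{Def:PairsDef}: unordered collections $\{(P_1, \alpha_1), \ldots, (P_r, \alpha_r)\}$ with $P_i \subseteq \{1, \ldots, N\}$ disjoint, $\alpha_i \ge 1$ satisfying $\sum \alpha_i = k$, and $\abs{P_i} \ge \alpha_i + 2$. Indeed, the ordered count of such data with $\abs{P_i} = p_i$ and $b = N - \sum p_i$ is $\binom{N}{p_1, \ldots, p_r, b}$, and dividing by $r!$ (the orbit size, since all pairs $(P_i, \alpha_i)$ are automatically distinct) gives the conjectural formula with $N = n+r-2$. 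Thus the conjecture can be restated as $\dim Q^r_{k,n} = \abs{\Pairs^r_{k, n+r-2}}$, and the task becomes constructing an explicit vector-space isomorphism realizing this equality.

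Next, I would generalize the map $W_n$. For $\sigma \in \mathbf{S}^r_{k,n}$ with high-valence vertices $v_1, \ldots, v_r$, cutting the $r$ edges incident to each $v_i$ on the paths to the other $v_j$'s decomposes $\sigma$ into $r$ ``petals'' $\sigma_1, \ldots, \sigma_r$ and a trivalent ``core'' $\sigma_{\mathrm{core}}$ with $r$ unmarked half-edges of attachment. Setting $P_i$ to be the markings of $\sigma_i$ and $\alpha_i = \val(v_i) - 3$ produces an element of $\Pairs^r_{k, n}$; however, for the dimension count the target must be $\Pairs^r_{k, n+r-2}$, so the map needs to be enhanced to account for $r-2$ ``virtual'' labels coming from the core. (A trivalent tree with $r$ unmarked and $m$ marked half-edges has $m + r - 2$ internal vertices, so this shift has a natural combinatorial source.) With a suitable $W^r_n$ in hand, I would follow the template of Section \ref{sec:ProofOutline}: set up filtrations on $\mathbf{S}^r_{k,n}$, $\Pairs^r_{k,n+r-2}$, and $Q^r_{k,n}$ graded by the complexity of $\sigma_{\mathrm{core}}$, and prove analogs of Lemmas \ref{lem:QToP} and \ref{lem:PToQ}, arguing by induction on the filtration level via the forgetful maps $\pi_{n+1}$.

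The principal obstacle lies in the base case of this induction (the analog of Proposition \ref{prop:Pairs0Isom}). In the $r=2$ proof, this required an intricate extension $\tilde{W}_n$ with rational coefficients discovered by computer experimentation; for $r \ge 3$ the corresponding extension must vanish on the much larger family of Kontsevich-Manin relations involving trees with up to $r$ high-valence vertices, and discovering it seems formidable. A second, more fundamental obstacle is that $Q^3_{3,9}$ is known not to be a permutation representation, so any isomorphism $Q^r_{k,n} \cong \Q\Pairs^r_{k, n+r-2}$ cannot be $S_n$-equivariant with respect to the natural action on the first $n$ coordinates -- the $r-2$ virtual labels must carry a nontrivial $S_n$-action coming from the combinatorics of $\sigma_{\mathrm{core}}$, and pinning down what this action should be is itself part of the problem.

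As an alternative, if constructing $W^r_n$ proves intractable, one might verify the conjecture purely at the dimension level by using the Bergstr\"om-Minabe recursive formula for the character of $H_{2k}(\Mbar_{0,n}, \Q)$, together with the pushforward properties of the filtration $Q^{\ge r}_{k,n}$ from \cite{Ramadas2015}, to derive a recursion for $\dim Q^r_{k,n}$, and then show that the conjectural formula satisfies the same recursion and base cases.
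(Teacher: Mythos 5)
The statement you are addressing is labeled a conjecture in the paper; the authors present no proof. They offer only numerical verification against the Bergstr\"om--Minabe character data through $n\le20$, plus the observation that the $r=1$ case appears in \cite{RamadasThesis} and the $r=2$ case follows from Theorem \ref{thm:MainThmStrong}. Your proposal does not claim to close the gap, and you say as much; it is an honest assessment of the difficulties rather than a proof.

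That said, your analysis is sound in the respects that can be checked. The combinatorial reading of the formula as $\abs{\Pairs^r_{k,n+r-2}}$, with $\Pairs^r$ defined in the obvious generalization of Definition \ref{Def:PairsDef}, is correct, including the remark that the $1/r!$ factor is legitimate because the pairs $(P_i,\alpha_i)$ are automatically distinct (each $P_i$ is nonempty and they are pairwise disjoint). More importantly, you correctly identify the decisive obstruction: any direct generalization of $W_n$ to a map $W^r_n$ assigning a set-theoretic label to each $\sigma\in\mathbf{S}^r_{k,n}$ and descending to $Q^r_{k,n}$ would exhibit $Q^r_{k,n}$ as an $S_n$-permutation representation, and the paper itself records that $Q^3_{3,9}$ is not one. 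So the $r\ge3$ case cannot be a formal copy of the $r=2$ argument; if an isomorphism $Q^r_{k,n}\cong\Q\Pairs^r_{k,n+r-2}$ exists, the $r-2$ ``virtual'' labels must carry a nontrivial $S_n$-action coming from the combinatorics of the trivalent core, and constructing that is exactly the open problem. Your fallback suggestion of deriving a recursion for $\dim Q^r_{k,n}$ from the Bergstr\"om--Minabe formula together with the pushforward-compatibility of the filtration is plausible at the level of dimensions, but you would need the pushforward to interact with the filtration sharply enough to isolate $Q^r_{k,n}$ rather than only the cumulative pieces $Q^{\ge r}_{k,n}$, and you have not verified that a closed recursion actually results.
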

The case $r=1$ appears in \cite{RamadasThesis}. The case $r=2$ follows from Theorem \ref{thm:MainThmStrong}, since the formula gives the cardinality of $\Pairs^2_{k,n}$. Summing over $r\in\{1,\ldots,\min\{k,n-2-k\}\}$ gives a conjectural closed formula for the dimension of $H_{2k}(\Mbar_{0,n},\Q)$. This formula agrees with the actual dimension for $n\le20$ and all $k$, but we have not been able to find it in the literature. (It is not obvious to us whether or not the formula is recoverable from e.g. \cite{Yuzvinsky1997}.)

\bibliographystyle{alpha}
\bibliography{research}

\end{document}